\pgfplotsset{compat=1.15}
\newtheorem{thm}{Theorem}[section]
\newtheorem{lemma}[thm]{Lemma}
\newtheorem{corollary}[thm]{Corollary}
\newtheorem{conj}[thm]{Conjecture}
\newtheorem{thm-con}[thm]{Theorem-Conjecture}
\numberwithin{equation}{section}
\theoremstyle{definition}
\newtheorem{defn}[thm]{Definition}
\newtheorem{rmk}[thm]{Remark}
\newtheorem{notation}[thm]{Notation}
\newtheorem{exmp}[thm]{Example}
\newtheorem{set}[thm]{Set-up}
\DeclareMathOperator{\height}{ht}
\DeclareMathOperator{\Ass}{Ass}
\DeclareMathOperator{\ann}{ann}
\DeclareMathOperator{\lcm}{lcm}
\DeclareMathOperator{\supp}{supp}
\newcommand{\h}{\mathcal{H}}
\newcommand{\e}{\mathcal{E}}
\newcommand{\al}{\alpha}
\newcommand{\q}{\mathfrak{q}}
\newcommand{\p}{\mathfrak{p}}
\begin{document}
\title[Waldschmidt constant of monomial ideals and Simis ideals]{Waldschmidt constant of monomial ideals and Simis ideals}

\author{Bijender}
\email{bijender@lnmiit.ac.in}

\address{Department of Mathematics, The LNM Institute of Information Technology, Jaipur, India - 302031.}

\author[Ajay Kumar]{Ajay Kumar}
\email{ajay.kumar@iitjammu.ac.in}

\address{Department of Mathematics, Indian Institute of Technology Jammu, J\&K, India - 181221.}

\date{\today}

\subjclass[2020]{Primary 13C70,  05E40, 13A70, 13F20}

\keywords{Weighted monomial ideals, symbolic powers, resurgence, asymptotic resurgence, Waldschmidt constant}

\maketitle	

\begin{abstract}
    In 2017, Cooper et al.\ proposed a conjecture providing a lower bound for the Waldschmidt constant of monomial ideals. We confirm this conjecture for some classes of monomial ideals. Recently, Méndez, Pinto, and Villarreal formulated a conjecture stating that if $I$ is a monomial ideal without embedded associated primes, whose irreducible decomposition is minimal and which is a Simis ideal, then there exist a Simis squarefree monomial ideal $J$ and a standard linear weighting $w$ such that $I = J_{w}.$ In this work, we verify this conjecture for some classes of monomial ideals.
\end{abstract}

\section{Introduction}
The study of comparison between symbolic powers and ordinary powers of a homogeneous ideal plays a crucial role in resolving major questions in commutative algebra and algebraic geometry. Unlike ordinary powers, which lack a direct geometric meaning, symbolic powers possess rich and attractive geometric properties. Yet, determining symbolic powers algebraically is typically quite challenging. These considerations motivate the fundamental problem of comparing symbolic and ordinary powers. Let $I \subset R = K[\mathbb{P}^{n-1}] = K[x_1, \dots, x_{n}], n \ge 2$, be a homogeneous ideal. For a positive integer $s \in \mathbb{N}$, the $s$-th symbolic power of $I$, denoted by $I^{(s)}$, is defined as 
$$I^{(s)} = \bigcap_{\p \in \Ass(I)} \left( I^{s} R_{\p} \cap R \right),$$
where $\Ass(I)$ is the set of associated primes of $I$ and $R_{\p}$ denotes the localization of $R$ at the prime ideal $\p.$ It is known that $I^r \subset I^{(s)}$ if and only if $r \ge s.$ However, the containment problem of symbolic powers in the ordinary powers of $I$ is a far more challenging problem. The containment problem aims to determine the positive integers $r$ and $s$ for which the inclusion $I^{(r)} \subset I^{s}$ holds.  Hochster and Huneke \cite{Hochster} establishes the containment $I^{(r(n-1))} \subset I^r$ for all $r \ge 1.$ Moreover, for certain classes of ideals, Harbourne established the improved containment $I^{(r(n-1)-n+2)} \subset I^{r}$ for all $r \ge 1.$ In an effort to further strengthen these containment relationships, Harbourne and Huneke \cite{Harbourne} introduced several conjectured containments in their work. In \cite{Cooper} Cooper et al.  used the symbolic polyhedron to obtain some containments of symbolic powers into ordinary powers of monomial ideals. These containment statements lead to bounds on the invariant $\al(I) = \min\{ d \mid I_d \neq 0 \}.$ Furthermore, for an ideal \( I \) of points in $\mathbb{P}^{n-1}$, various authors (see \cite{Chudnovsky1,Skoda,Waldschmidt}) obtained bounds on the invariant $\al(I)$. In Chudnovsky~\cite{Chudnovsky1} conjectured that
$$\frac{\al \left( I^{(s)} \right)}{s} \; \ge \; \dfrac{\al(I) + n - 2}{n-1},$$
where $I$ is an ideal of points in $\mathbb{P}^{n-1},$ where $ n \ge 2$ and $s \ge 1.$ It is known that the limit $\displaystyle \lim\limits_{s \to \infty}\frac{\al(I^{(s)})}{s}$ exists and is denoted by $\hat{\al}(I)$, which is referred as the \textit{Waldschmidt constant} of $I.$ Therefore, above conjecture can also be stated in terms of the Waldschmidt constant of $I.$

Let $I \subset K[x_{1},\ldots,x_{n}]$ be a monomial ideal with big height $h.$ When $h = n$, the maximal ideal $\mathfrak{m} = \langle x_{1},\ldots,x_{n}\rangle$ is an associated prime of $I$. Consequently, the symbolic and ordinary powers coincide, $I^{(s)} = I^{s}$ for all $s \ge 1$, (see \cite[Lemma 3.3]{Cooper}) and hence  $\hat{\al}(I)=\alpha(I).$ Therefore  $\hat{\al}(I)$ satisfies the lower bound of Chudnovsky's conjecture. Assuming that $h \le n - 1$ , Cooper et al.~\cite{Cooper} formulated an analogous conjecture in the context of monomial ideals.

\begin{conj} \label{C1}
    Let $I \subset K[x_1, \ldots, x_n]$ be a monomial ideal of big-height $h.$ Then 
    $$\hat{\al}(I) \ge \frac{\al( I) + h - 1}{h}.$$
\end{conj}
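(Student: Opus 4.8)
The plan is to argue polyhedrally. Recall (see \cite{Cooper}) that for a monomial ideal $I$ with $\Ass(I)=\{P_1,\dots,P_t\}$ --- assume, as in the setting of interest, that $I$ has no embedded primes, and write $Q_i = IR_{P_i}\cap R$ for the $P_i$-primary component --- the \emph{symbolic polyhedron} is $SP(I)=\bigcap_{i=1}^{t}\operatorname{NP}(Q_i)$ and $\hat{\al}(I)=\min\{\,|\mathbf a|:\mathbf a\in SP(I)\,\}$, where $|\mathbf a|=a_1+\cdots+a_n$; likewise $\al(I)=\min\{\,|\mathbf a|:\mathbf a\in\operatorname{NP}(I)\,\}$, a value attained at the exponent vector of a minimal generator of $I$. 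Since $Q_i$ is generated by monomials in the $\height(P_i)\le h$ variables $\{x_j:j\in\supp(P_i)\}$, every facet of $\operatorname{NP}(Q_i)$ --- and hence every inequality defining $SP(I)$ --- has the form $\sum_{j\in\supp(P_i)}\lambda_j a_j\ge 1$ with $\lambda_j\ge 0$ and at most $h$ nonzero coefficients. The plan is then to compare the two linear programs $\hat{\al}(I)=\min\{|\mathbf a|:\mathbf a\in SP(I)\}$ and $\al(I)=\min\{|\mathbf a|:\mathbf a\in\operatorname{NP}(I)\}$, exploiting that every constraint defining $SP(I)$ involves at most $h$ of the variables.

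I would first dispose of the squarefree case. If $I$ is squarefree then $Q_i=P_i$, so $SP(I)$ is the fractional covering polyhedron of the hypergraph $\mathcal H$ on $\{1,\dots,n\}$ with edges $A_i=\supp(P_i)$, each of size at most $h$; here $\hat{\al}(I)$ equals the fractional covering number $\tau^{*}(\mathcal H)$ and $\al(I)$ equals the covering number $\tau(\mathcal H)$, so that Conjecture~\ref{C1} becomes the purely combinatorial inequality $\tau(\mathcal H)\le h\bigl(\tau^{*}(\mathcal H)-1\bigr)+1$. I would prove this by induction on the number of edges: choose an optimal fractional cover $\mathbf a^{*}$; some edge $e$ is tight (so $\sum_{j\in e}a^{*}_j=1$), and as $|e|\le h$ there is a vertex $v\in e$ with $a^{*}_v\ge 1/h$; delete $v$ and all edges through it to obtain $\mathcal H_1$. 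A minimum cover of $\mathcal H_1$ together with $v$ covers $\mathcal H$, so $\tau(\mathcal H)\le\tau(\mathcal H_1)+1$, whereas restricting $\mathbf a^{*}$ yields a fractional cover of $\mathcal H_1$ of weight $\tau^{*}(\mathcal H)-a^{*}_v$, so $\tau^{*}(\mathcal H_1)\le\tau^{*}(\mathcal H)-1/h$. Substituting the inductive bound for $\mathcal H_1$ into these two inequalities leaves the additive constant $-(h-1)$ intact; the terminal case (when $\mathcal H_1$ is edgeless) simply forces $\tau(\mathcal H)=1$.

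For ideals that are not squarefree I would try to reduce to the previous case. If $I=J_w$ for a squarefree monomial ideal $J$ and a standard linear weighting $w$ --- the M\'endez--Pinto--Villarreal situation --- I would relate $\al(I)$, the big height of $I$, and $\hat{\al}(I)$ to the corresponding data of $J$ and $w$, and then deduce Conjecture~\ref{C1} for $I$ from the squarefree case applied to $J$; the same strategy applies to any class for which polarization, or passage to the associated squarefree ideal, is compatible with $\al$, big height, and $\hat{\al}$. When no such reduction is available I would argue directly on $SP(I)=\bigcap_i\operatorname{NP}(Q_i)$: starting from a point $\mathbf a^{*}\in SP(I)$ with $|\mathbf a^{*}|=\hat{\al}(I)$ and using that each $\operatorname{NP}(Q_i)$ constrains only $\le h$ coordinates, I would construct a lattice point of $\operatorname{NP}(I)$ of $\ell^{1}$-norm at most $h\,\hat{\al}(I)-(h-1)$, which gives the desired bound $\al(I)\le h\,\hat{\al}(I)-(h-1)$.

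The main obstacle is precisely this last step. For non-squarefree $I$ the Newton polyhedron $\operatorname{NP}(I)=\operatorname{NP}\bigl(\bigcap_i Q_i\bigr)$ is in general strictly smaller than $\bigcap_i\operatorname{NP}(Q_i)=SP(I)$, and it need not be the integer hull of $SP(I)$; consequently $\al(I)$ --- which reads off honest monomials of $I$ --- may exceed the least $\ell^{1}$-norm of an integer point of $SP(I)$, and the bound $h$ on the width of the constraints does not by itself control this gap. The structural hypotheses singling out the classes we treat (being a weighted squarefree ideal, or having primary components of a prescribed shape) are there precisely to guarantee that passing from a fractional point of $SP(I)$ to a genuine generator of $I$ loses at most a factor $h$ and an additive $h-1$. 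I expect the squarefree case and the weighted-ideal case to go through cleanly along the lines above, the remaining difficulty being to pin down the widest family of primary decompositions for which the rounding step still succeeds.
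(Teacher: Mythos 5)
First, note that the statement you are addressing is stated as a \emph{conjecture} in the paper and remains open: the paper itself only verifies it for special classes (Theorems \ref{Th-Waldschmidt} and \ref{Th-Whisker}), so there is no complete proof to compare against, and your proposal --- as you yourself acknowledge --- is not one either. The gap you flag at the end, namely passing from a minimizer of $|\mathbf a|$ over the symbolic polyhedron $\bigcap_i\operatorname{NP}(Q_i)$ to an honest generator of $I$ when $\operatorname{NP}(I)$ is strictly smaller than that intersection, is exactly where the conjecture is hard, and no rounding argument is supplied. Your treatment of the squarefree case (the inequality $\tau(\mathcal H)\le h\bigl(\tau^{*}(\mathcal H)-1\bigr)+1$, obtained by deleting a vertex of weight at least $1/h$ on a tight edge) is correct and is essentially the Bocci et al.\ result that the paper cites as its black box; just take care that the induction terminates at a hypergraph with at least one edge, where $\tau^{*}\ge 1$ forces $\tau\le 1\le h(\tau^{*}-1)+1$, since the inequality is false for the edgeless hypergraph.

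Second, for the weighted case your plan contains a step the paper explicitly shows cannot work. You propose to ``deduce Conjecture \ref{C1} for $I=J_w$ from the squarefree case applied to $J$.'' But $\alpha(I)\ge\alpha(J)$, so the squarefree bound for $J$ only yields $\hat{\alpha}(I)\ge\hat{\alpha}(J)\ge\frac{\alpha(J)+h-1}{h}$, which is weaker than the required $\frac{\alpha(I)+h-1}{h}$; this is precisely the point of the remark following the paper's theorem on resurgence. The reduction that does work is via the polarization $\tilde I$, which preserves $\alpha$ and big height, and the entire content of the paper's Theorem \ref{Th-Waldschmidt} is Lemma \ref{L-WaldschmidtC}: $\alpha(I^{(s)})=\alpha(\tilde I^{(s)})$ for all $s$ when $I$ has a standard linear weighting, whence $\hat{\alpha}(I)=\hat{\alpha}(\tilde I)$ and the squarefree case applies to $\tilde I$. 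You mention polarization only as an option that might be ``compatible with $\hat{\alpha}$'' without proving that compatibility, and the paper's example $I=\langle x_1,x_2^2\rangle\cap\langle x_2,x_3^2\rangle$ shows it genuinely fails without the weighting hypothesis. So even the special cases you expect to ``go through cleanly'' rest on a nontrivial lemma that your proposal does not supply.
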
    
    
Bocci et al. \cite{bocci2016waldschmidt} established Conjecture~\ref{C1} for the class of squarefree monomial ideals. Motivated by their work, the first objective of this paper is to examine Conjecture~\ref{C1} for the class of monomial ideals without embedded associated primes.

In Section~\ref{S-Waldschmidt}, we establish Conjecture~\ref{C1} for some classes of monomial ideals, beginning with the class of monomial ideals having a standard linear weighting (see Definition~\ref{D-Weighting}).

\vspace{3mm}
\noindent
    \textbf{Theorem \ref{Th-Waldschmidt}.}\textit{
    Let $I \subset R$ be a monomial ideal. If $I$ has a standard linear weighting and $h$ denotes the big-height of $I$, then Conjecture \ref{C1} holds for $I.$}
	%$$\hat{\al}(I) \ge \frac{\al(I) + h - 1}{h}.$$}

\vspace{3mm}

We now set our notation. Let $I \subset R$ be a monomial ideal without embedded associated primes, and let $I = \q_{1} \cap \cdots \cap \q_{r}$ be its minimal irreducible decomposition. Since for each $1 \le j \le r$, $\q_j$ is irreducible, we can write $\q_{j} = \langle x_{i}^{w_{j,i}} : i \in A_{j} \rangle$ for some $A_j \subset [n].$ The radical $\sqrt{I}$ of $I$ can be considered as the cover ideal of a hypergraph, which we denote by $\h_I.$ If $\h_I$ is a graph, then we denote it by $G_I.$ We prove Conjecture~\ref{C1} for the monomial ideal $I$ by assuming that $\h_I$ is a whiskered hypergraph (see Definition \ref{D-Whisker}) and applying certain conditions on the integers $w_{j,i}.$

\vspace{3mm}
\noindent
    \textbf{Theorem \ref{Th-Whisker}.}\textit{
   Let $I \subset R$ be a monomial ideal with no embedded associated primes and admitting the minimal irreducible decomposition. Suppose $\h_I$ is a whiskered hypergraph with whiskers $A_{j_1}, \dots, A_{j_t}$ attached to the vertices $i_1, \dots, i_t$, respectively. Assume that the following conditions hold:
    \begin{enumerate}[\rm(i)]
        \item $w_{j_k, i} \ge w_{j_k, i_k}$ for all $1 \le k \le t$ and $i \in A_{j_k}$, and
        %\item $V_1 = \{i_1, \dots, i_t\}$ with $V_1 \cap A_{j_k} = \{i_k\}$,
        \item for each $j \ne j_1, \dots, j_t$, there exists $1 \le k \le t$ with $i_k \in A_j$ such that $w_{j, i_k} \le w_{j_k, i_k}.$
    \end{enumerate}
    If $h$ is the big-height of $I$, then Conjecture \ref{C1} holds for $I.$}      

\vspace{3mm}

A monomial ideal $I \subset R$ is called a \textit{Simis ideal} if $I^{(s)} = I^{s}$ for all $s \ge 1.$ Providing a combinatorial characterization of Simis ideals remains a challenging open problem. In the case of squarefree monomial ideals, the problem coincides (see \cite{Conforti1}) with the classical packing conjecture of Conforti and Cornuejols. This problem has been resolved in several important cases (see \cite{BBKM,DasK,Gilter,GRV,GIV,HJTZ,MPD21,MPD,Simis} ). These results employ techniques from combinatorial optimization and graph theory. Méndez et al. in \cite{MVV} proposed the following conjecture.        

\begin{conj}\cite[Conjecture 5.7]{MVV}\label{C2}
    Let $I$ be a monomial ideal without any embedded associated primes. If the irreducible decomposition of $I$ is minimal and $I$ is a Simis ideal, then there exist a Simis squarefree monomial ideal $J$ and a standard linear weighting $w$ such that $I = J_{w}.$
\end{conj}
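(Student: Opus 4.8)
The plan is to read off both ingredients of the decomposition $I=J_{w}$ directly from the minimal irreducible decomposition $I=\q_{1}\cap\cdots\cap\q_{r}$, $\q_{j}=\langle x_{i}^{w_{j,i}}:i\in A_{j}\rangle$. The squarefree ideal $J$ should be the ``squarefree model'' of this decomposition---the cover ideal $\bigcap_{j}\langle x_{i}:i\in A_{j}\rangle$ of $\h_I$ when the radicals $\sqrt{\q_{j}}$ are pairwise distinct, and more generally the squarefree ideal obtained after splitting variables (a polarization-type move) so that no radical repeats---and the weighting $w$ should be the exponent array $(w_{j,i})$, suitably normalized. By construction $I=J_{w}$; so verifying the conjecture for a given $I$ amounts to two tasks: (1) showing that this $w$ is genuinely a \emph{standard linear} weighting in the sense of Definition~\ref{D-Weighting}, and (2) showing that $J$ is itself a Simis squarefree monomial ideal.

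For (1), the idea is that the equality $I^{(s)}=I^{s}$ for all $s$ rigidly constrains the exponents $w_{j,i}$. Using the monomial/polyhedral description of symbolic powers---$I^{(s)}$ is cut out by the ``$s$-dilated'' symbolic polyhedron of $I$, while $I^{s}$ is governed by the Newton polyhedron of $I$---one compares the two polyhedra face by face. Already $I^{(2)}=I^{2}$ forces, for each pair of components $\q_{j},\q_{k}$ meeting in a vertex $i$, a compatibility relation among $w_{j,i}$, $w_{k,i}$ and the remaining exponents; running this through all $s$ (an induction on $s$, or on the combinatorial complexity of $\h_I$) shows that the exponent patterns which are Simis are precisely those prescribed by a standard linear weighting. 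The restriction on $\h_I$---that it be a graph, or a whiskered hypergraph with whiskers as in Theorem~\ref{Th-Whisker}---is what keeps this polyhedral bookkeeping finite and makes the induction close.

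For (2), one must show $J$ is Simis, equivalently (for squarefree ideals) that the underlying (hyper)graph has the packing property. I would either transfer the Simis property across the weighting---since $w$ is standard linear, the substitution relating $J$ and $I$ is tame enough that $J^{(s)}\supsetneq J^{s}$ for some $s$ would force $I^{(s)}\supsetneq I^{s}$, contradicting that $I$ is Simis---or, for the classes at hand, invoke the resolved cases of the Conforti--Cornuejols packing conjecture directly: bipartiteness of $G_I$ when $\h_I$ is a graph, and the known packing theorems for whiskered hypergraphs otherwise. Either way $J$ comes out Simis, and combined with (1) this gives Conjecture~\ref{C2} for the class in question.

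The main obstacle is (2) in full generality: deciding whether $\sqrt I$ (the cover ideal of $\h_I$) is Simis is, for squarefree ideals, equivalent to the packing conjecture, which is open, so the theorem can only be asserted for families of $\h_I$ where packing is already known. A secondary, more technical difficulty is (1): one has to rule out \emph{every} exponent pattern outside the standard-linear ones, uniformly in $s$, which requires a careful comparison of the symbolic polyhedron of $I$ with its Newton polyhedron---precisely the polyhedral machinery already set up in Section~\ref{S-Waldschmidt} for Theorems~\ref{Th-Waldschmidt} and~\ref{Th-Whisker}.
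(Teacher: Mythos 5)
The statement here is a conjecture that the paper verifies only for special classes (Theorems~\ref{Th-General} and~\ref{Th-2Height}); your skeleton---read $J$ and $w$ off the minimal irreducible decomposition, show the Simis hypothesis forces the exponents into standard-linear form, then transfer the Simis property to $J=\sqrt{I}$---is indeed the paper's route. Two corrections to the set-up: no polarization-type splitting is needed to make the radicals distinct, since minimality of the irreducible decomposition already guarantees $\sqrt{\q_j}\ne\sqrt{\q_k}$; and ``by construction $I=J_w$'' is false as stated, because the exponent array $(w_{j,i})$ defines a standard linear weighting only when $w_{j,i}$ is independent of $j$, i.e.\ when the set $X_I$ of Set-up~\ref{setup} is empty. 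That $I$ Simis forces $X_I=\emptyset$ is the entire content of step~(1), and it is exactly here that the paper's extra hypotheses enter. The paper proves the contrapositive: if two components disagree at a vertex $i_0$ with exponents $w_1<w_2$, write $w_2=\al w_1+\beta$ and build an explicit monomial $f=\lcm(f_1,\dots,f_r)\in I^{(\al+1)}$ that violates the integer-programming criterion of Lemma~\ref{L-MatrixSystem} for membership in $I^{\al+1}$ (Lemmas~\ref{L-General}, \ref{L-General2}, \ref{L-2Height}). This construction needs auxiliary vertices $i_j\in A_j\setminus(A_1\cup A_s)$, which is precisely why the hypotheses $A_j\not\subset A_k\cup A_l$ or $\height(\p)=2$ are imposed; your polyhedral induction gestures at the same comparison but never produces a witness and never identifies where such hypotheses would be needed, so it would not close in general.

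Your diagnosis of the ``main obstacle'' is also inverted. Step~(2) does not require deciding the packing conjecture for $J$: the conjecture only asks that $J$ be Simis \emph{given} that $I$ is, and once $I=J_w$ is established this is immediate from the transfer result \cite[Corollary 5.5]{MVV} (one line in the proofs of Theorems~\ref{Th-General} and~\ref{Th-2Height}). The genuine bottleneck---both for the paper and for any attempt at the full conjecture---is step~(1): ruling out inconsistent exponent patterns for arbitrary overlap patterns of the sets $A_j$.
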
    

This conjecture has been confirmed for the edge ideals of weighted oriented graphs \cite{GGMV}. Recently, the authors in \cite{bordoloi} established the conjecture for a class of support-$2$ monomial ideals. The second objective of this work is to study Conjecture \ref{C2} for certain classes of monomial ideals. These classes of monomial ideals are obtained by imposing specific conditions on the generating sets of their associated primes. In Section \ref{S-Simis}, we prove the following results.
    
\vspace{3mm}
\noindent
    \textbf{Theorem \ref{Th-General}.}\textit{
    Suppose that $A_j \not\subset A_k \cup A_l$ for every $j \in [r] \setminus \{k, l\}$ and for every pair $(k, l)$ satisfying 
    $w_{k,i} \ne w_{l,i} \text{ for some } i \in A_k \cap A_l.$
    Then Conjecture \ref{C2} holds for $I.$}
    
\vspace{3mm}
\noindent
    \textbf{Theorem \ref{Th-2Height}.}\textit{
    If $\height(\p) = 2$ for all $\p \in Ass(I)$, then Conjecture \ref{C2} holds for $I.$}

\vspace{3mm}

In \cite[Theorem 7.4]{MVV}, Méndez, Pinto, and Villarreal proved that if $D$ is a weighted oriented graph with underlying graph $G$, then $J(D)^{(2)} = J(D)^2$ if and only if every vertex in $V^{+}(D)$ is a sink and $G$ is bipartite (see \cite{MVV} for the definition of a weighted oriented graph and related terminology).
This inspires us to explore symbolic powers of a more general class of monomial ideals. 

\vspace{3mm}
\noindent
    \textbf{Corollary \ref{C-2Height}.}\textit{
    Let $I \subset R$ be a monomial ideal without embedded associated primes, and let $\height(\p) = 2$ for all $\p \in \Ass(I).$ Then the following conditions are equivalent:
    \begin{enumerate}[\rm(i)]
        \item $I$ is a Simis ideal.
        \item $I^{(2)} = I^2.$
        \item $G_I$ is a bipartite graph and $I$ has a standard linear weighting.
    \end{enumerate}}
    
\section{Preliminaries}\label{Prelim} 

Let $K$ be a field, and let $R$ denote the polynomial ring $K[x_1, \dots, x_n].$ 

\begin{defn}
	The \textit{support} of a monomial $f \in R$ is the set 
	$\supp(f) = \{i \in [n] : x_i \mid f\}.$ 
\end{defn}

\begin{defn}
	Let $I \subset R$ be an ideal.
	\begin{enumerate}[(a)]
		\item A prime ideal $\p$ is said to be an \textit{associated prime} of $I$ if there exists $f \in R$ such that $\ann(f + I) = \p$, where $\ann(f + I) = \{g \in R : fg \in I\}$ denotes the \textit{annihilator} of $f + I.$ The set of all associated prime ideals of $I$ is denoted by $\Ass(I).$
		
		\item An associated prime $\p$ of $I$ is called an \textit{embedded prime} of $I$ if there exists an associated prime $\p'$ of $I$ such that $\p' \subsetneq \p.$
		
		\item  The \textit{big-height} of an ideal $I \subset R$ is the maximum of the heights of its associated primes.
		
		\item A presentation of $I$ as intersection $I = \q_1 \cap \cdots \cap \q_r$, where $\q_1, \dots, \q_r$ are primary ideals, is called a \textit{primary decomposition} of $I.$ The primary decomposition is called \textit{minimal primary decomposition} if $\sqrt{\q_j} \ne \sqrt{\q_k}$ for all $j \ne k$ and $\bigcap_{k \ne j} \q_k \not\subset \q_j$ for all $1 \le j \le r.$
	\end{enumerate}
\end{defn}

\begin{rmk}
	Let $I = \q_1 \cap \cdots \cap \q_r$ be a minimal primary decomposition of $I$, and let $\sqrt{q_j} = \p_j$ for all $1 \le j \le r.$ Then $\Ass(I) = \{\p_1, \dots, \p_r\}.$
\end{rmk}

\begin{defn}
	An ideal $\q \subset R$ is called an \textit{irreducible ideal} if it cannot be written as the intersection of two ideals properly containing $\q.$ 
\end{defn}

\begin{rmk}
	Let $\q$ and $I$ be monomial ideals of $R.$
	\begin{enumerate}[(i)]
		\item By \cite[Corollary 1.3.2]{HBook}, $\q$ is irreducible if and only if $\q = \langle x_{i_1}^{w_1}, \dots, x_{i_a}^{w_a} \rangle$ for some $\{i_1, \dots, i_a\} \subset [n]$ and integers $w_1, \dots, w_a \in \mathbb{N}_{+}.$ In this case, $\q$ is $\langle x_{i_1}, \dots, x_{i_a} \rangle$-primary.
		
		\item In view of \cite[Theorem 1.3.1]{HBook}, $I$ can be written as intersection of irreducible monomial ideals. The representation of $I$ as intersection of irreducible monomial ideals is called a \textit{irreducible decomposition} of $I.$ Since irreducible ideals are primary ideals, the irreducible decomposition of $I$ is a primary decomposition of $I.$
	\end{enumerate}	 
\end{rmk}

We write $\mathbb{N}$ and $\mathbb{N}_{+}$, respectively, for the set of non-negative integers and positive integers.

\begin{defn}\label{D-Weighting}
    Let $w : \mathbb{R}^n \to \mathbb{R}^n$ be a linear function such that $w(\mathbb{N}_{+}^n) \subset \mathbb{N}_{+}^n.$
    \begin{enumerate}[(a)]
        \item The linear function $w$ is called a \textit{linear weighting} on $R.$

        \item For each $a = (a_1, \dots, a_n) \in \mathbb{N}^n$, let $x^a$ denote the monomial $x_1^{a_1} \cdots x_n^{a_n}$ of $R.$ A linear weighting $w$ of $R$ associates a new monomial $x^{w(a)}$ to the monomial $x^a.$ 
        
        \item The \textit{weighted monomial ideal} of a monomial ideal $I \subset R$, denoted $I_w$, is defined as
	    $$I_w = \langle x^{w(a)} : x^a \in I \rangle \subset R.$$

        \item A linear weighting $w$ on $R$ is said to be \textit{standard} if there exist positive integers $w_1, \dots, w_n$ such that $w(a) = (w_1 a_1, \dots, w_n a_n)$ for all $a = (a_1, \dots, a_n) \in \mathbb{R}^n.$ We write $w = (w_1, \dots, w_n)$ for the standard linear weighting $w.$

        \item We say that a monomial ideal $I \subset R$ \textit{has a standard linear weighting} if there exists a squarefree monomial ideal $J \subset R$ and a standard linear weighting $w = (w_1, \dots, w_n)$ on $R$ such that $I = J_w.$
    \end{enumerate}    
\end{defn}

\begin{defn}
	Let $I \subset R$ be a nonzero monomial ideal, and let $\al(I) = \min\{d : I_d \ne 0\}.$ The \textit{Waldschmidt constant} of $I$ is defined as
	$$\hat{\al}(I) = \lim_{s \to \infty} \frac{\al(I^{(s)})}{s}.$$
\end{defn}

\begin{defn}
	Let $I \subset R$ be an ideal. 
	\begin{enumerate}[(a)]
		\item The \textit{resurgence} of $I$ is defined by
		$$\rho(I) = \sup \left\{ \frac{s}{t} : s, t \in \mathbb{N}_{+} \text{ and } I^{(s)} \not\subset I^{t} \right\}.$$
		
		\item The \textit{asymptotic resurgence} of $I$ is defined by
		$$\rho_a(I) = \sup \left\{ \frac{s}{t} : s, t \in \mathbb{N}_{+} \text{ and } I^{(sp)} \not\subset I^{tp} \text{ for } p >> 0\right\}.$$
	\end{enumerate}	
\end{defn}

We end this section by introducing the notion of polarization.

\begin{defn}
	Let $I = \langle f_1, \dots, f_m \rangle \subset R$ be a monomial ideal. For each $1 \le j \le m$, assume that $f_j = x_1^{w_{j, 1}} \cdots x_n^{w_{j, n}}.$ Set $w_i = \max\{w_{j, i} : 1 \le j \le m\}$, where $1 \le i \le n.$ In the polynomial ring $T = K[x_{1, 1}, \dots, x_{1, w_1}, \dots, x_{n, 1},\dots,x_{n, w_n}]$, consider the squarefree monomials $\tilde{f}_1, \dots, \tilde{f}_m$, where 
	$$\tilde{f}_j = (x_{1, 1} \cdots x_{1, w_{j, 1}}) \cdots (x_{n, 1} \cdots x_{n, w_{j, n}}).$$
	The squarefree monomial ideal $\tilde{I} = \langle \tilde{f}_1, \dots, \tilde{f}_m \rangle \subset T$ is called the \textit{polarization} of $I.$
\end{defn}

\section{The Waldschmidt Constant of a Standard Weighted Monomial Ideal}\label{S-Waldschmidt}

In this section, we establish Conjecture~\ref{C1} for some classes of monomial ideals. At the beginning, we show that the resurgence and asymptotic resurgence are preserved under standard linear weighting and obtain an upper bound for the asymptotic resurgence of a monomial ideal having standard linear weighting.

\begin{defn}
	Let $\h$ be a hypergraph. Then the \textit{chromatic number} $\chi(\h)$ of $\h$ is defined to be the least number of colors required to color the vertices such that each edge has at least two vertices of different colors.
\end{defn} 

\begin{defn}
    Let $I \subset R$ be a monomial ideal. We define $\h_I$ to be the hypergraph on the vertex set $[n]$ whose cover ideal is $\sqrt{I}$. In particular, if $\h_I$ is a graph, we denote it by $G_I.$
\end{defn}

\begin{thm}
	Let $I \subset R$ be a monomial ideal, and let $w$ be a standard linear weighting on $R.$ Then $\rho(I_w) = \rho(I)$ and $\rho_a(I_w) = \rho_a(I).$ Moreover, if $h$ is the big-height of $I$ and $I$ has a standard linear weighting, then
    $$\rho_a(I) \le h - \frac{1}{\chi(\h_I)}.$$
\end{thm}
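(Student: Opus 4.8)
The plan is to treat the three assertions in order, reducing the inequality to the squarefree case by means of the first two.

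\textbf{Invariance of (asymptotic) resurgence under standard linear weighting.} Write $w=(w_1,\dots,w_n)$ with all $w_i>0$, so that the induced map $a\mapsto w(a)=(w_1a_1,\dots,w_na_n)$ on exponent vectors is additive and coordinatewise strictly increasing. Three observations then suffice: (i) by additivity, $(I^s)_w=(I_w)^s$ for every $s\ge1$; (ii) $(A\cap B)_w=A_w\cap B_w$ for monomial ideals $A,B$, and $w$ is likewise compatible with the colon/saturation operations at monomial primes, so the combinatorial description of $I^{(s)}$ for monomial ideals (in terms of the exponents of the irreducible components of $I$) transports through $w$ --- note that $w$ fixes the supports $A_j$ and the radicals of the irreducible components, hence carries the irreducible decomposition of $I$ to that of $I_w$ --- giving $(I^{(s)})_w=(I_w)^{(s)}$; (iii) since $w$ is strictly increasing, $A_w\subseteq B_w\iff A\subseteq B$. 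Combining (i)--(iii): $I^{(s)}\subseteq I^t\iff(I_w)^{(s)}\subseteq(I_w)^t$ and $I^{(sp)}\subseteq I^{tp}\iff(I_w)^{(sp)}\subseteq(I_w)^{tp}$, whence $\rho(I_w)=\rho(I)$ and $\rho_a(I_w)=\rho_a(I)$.

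\textbf{Reduction of the bound to the squarefree case.} Suppose $I=J_w$ with $J$ squarefree and $w$ standard, and write $J=\bigcap_{E\in\h}\mathfrak{p}_E$ with $\mathfrak{p}_E=\langle x_i:i\in E\rangle$ for a clutter $\h$ on $[n]$. Then $I=\bigcap_E\langle x_i^{w_i}:i\in E\rangle$, so $\sqrt I=J$, $\h_I=\h:=\h_J$, and $\Ass(I)=\{\mathfrak{p}_E\}=\Ass(J)$; in particular $I$ and $J$ share the big-height $h=\max_{E\in\h}|E|$. By the first part $\rho_a(I)=\rho_a(J)$, so it is enough to prove $\rho_a(J(\h))\le h-\tfrac1{\chi(\h)}$ for every clutter $\h$. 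If $\h$ has an edge of size $1$, then $\chi(\h)=\infty$ and the claim reduces to $\rho_a(J)\le h$, which is the Ein--Lazarsfeld--Smith/Hochster--Huneke containment; so we may assume every edge has size $\ge2$, hence $h\ge2$ and $\chi:=\chi(\h)<\infty$.

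\textbf{The squarefree case.} Here $J^{(m)}=\bigcap_E\mathfrak{p}_E^m$ is integrally closed, so, using the symbolic polyhedron of \cite{Cooper} together with the Newton polyhedron and the known description of the asymptotic resurgence of a monomial ideal, one has
\[
\rho_a(J)=\min\{\mu>0:\mu\cdot\mathrm{SP}(J)\subseteq\mathrm{NP}(J)\},
\]
where $\mathrm{SP}(J)=\{a\in\mathbb{R}_{\ge0}^n:\textstyle\sum_{i\in E}a_i\ge1\ \text{for all }E\in\h\}$ and $\mathrm{NP}(J)=\mathrm{conv}\{\mathbf{1}_C:C\ \text{a minimal vertex cover of }\h\}+\mathbb{R}_{\ge0}^n$. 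It thus suffices to prove $(h-\tfrac1\chi)\,\mathrm{SP}(J)\subseteq\mathrm{NP}(J)$, i.e.\ that every $b\in\mathbb{R}_{\ge0}^n$ with $\sum_{i\in E}b_i\ge h-\tfrac1\chi$ for all $E\in\h$ lies in $\mathrm{NP}(J)$. Dualizing the polyhedron $\mathrm{NP}(J)$, this is the requirement $\langle b,y\rangle\ge\min_C\langle\mathbf{1}_C,y\rangle=\tau_y(\h)$ for every $y\in\mathbb{R}_{\ge0}^n$, where $\tau_y$ denotes the $y$-weighted vertex cover number. Since $\tfrac{1}{h-1/\chi}\,b$ is a feasible fractional vertex cover, linear-programming duality gives $\langle b,y\rangle\ge(h-\tfrac1\chi)\,\tau_y^{*}(\h)$ with $\tau_y^{*}$ the fractional weighted cover number, so everything comes down to the purely combinatorial estimate
\[
\tau_y(\h)\le\bigl(h-\tfrac1{\chi(\h)}\bigr)\,\tau_y^{*}(\h)\qquad\text{for all }y\in\mathbb{R}_{\ge0}^n .
\]
I would establish this by LP rounding guided by a proper coloring: the vertices $i$ with $x_i^{*}\ge1/h$ in an optimal fractional cover $x^{*}$ already form a vertex cover of $y$-weight at most $h\,\tau_y^{*}$, and a proper $\chi$-coloring $[n]=V_1\sqcup\cdots\sqcup V_\chi$ (each $[n]\setminus V_k$ is again a cover, and the $V_k$ partition $[n]$) ought to let one shave off, on average over $k$, a $y$-weight of at least $\tfrac1\chi\tau_y^{*}$ from this rounded cover while keeping it a cover. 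Making the rounding work uniformly in $y$ --- equivalently, pinning the integrality gap of the vertex cover LP to $h-\tfrac1\chi$ via the chromatic number --- is the one genuinely delicate step; the remainder of the argument is formal.
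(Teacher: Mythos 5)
Your treatment of the two invariance statements $\rho(I_w)=\rho(I)$ and $\rho_a(I_w)=\rho_a(I)$ is essentially the paper's argument: transport ordinary and symbolic powers through $w$ (the paper cites \cite[Lemma 1]{Toledo2021} and \cite[Lemma 4.3]{MVV} for $(I^{(s)})_w=(I_w)^{(s)}$ and $(I^{t})_w=(I_w)^{t}$) and use that $w$ reflects containments of monomial ideals. Your reduction of the inequality to the squarefree ideal $J=\sqrt I$ via $\rho_a(I)=\rho_a(J)$ also matches the paper, which at that point simply invokes \cite[Theorem 3.12]{jayanthan2022resurgence} for $\rho_a(J)\le h-1/\chi(\h_I)$.

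The gap is in your attempt to reprove that squarefree bound. After translating it (via the polyhedral description of $\rho_a$ and LP duality) into the combinatorial inequality $\tau_y(\h)\le\bigl(h-\tfrac1{\chi(\h)}\bigr)\tau_y^{*}(\h)$ for all weight vectors $y\ge0$, you do not prove this inequality: you only sketch a rounding scheme and yourself flag making it work as ``the one genuinely delicate step.'' As written the sketch does not go through. The rounded set $S=\{i:x_i^{*}\ge1/h\}$ is indeed a cover of $y$-weight at most $h\,\tau_y^{*}$, but $S\setminus V_k$ need not be a cover for any colour class $V_k$: an edge may meet $S$ only inside $V_k$ (it is $[n]\setminus V_k$, not $S\setminus V_k$, that is guaranteed to be a cover), and the averaging estimate for the weight removed is asserted, not justified, uniformly in $y$. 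Thus the concluding inequality --- which is exactly the content of the cited \cite[Theorem 3.12]{jayanthan2022resurgence} --- is left unestablished. Either prove the weighted cover inequality in full (it is not an off-the-shelf fact for non-uniform, weighted hypergraphs) or cite that theorem directly, as the paper does.
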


\begin{proof}
	Let $s, t, p \in \mathbb{N}_{+}.$ Then $I^{(sp)} \not\subset I^{tp}$ if and only if $(I^{(sp)})_w \not\subset (I^{tp})_w.$ In view of \cite[Lemma 1]{Toledo2021} and \cite[Lemma 4.3]{MVV}, this holds if and only if $(I_w)^{(sp)} \not\subset (I_w)^{tp}.$ This implies that $\rho_a(I_w) = \rho_a(I).$ By taking $p = 1$ in the above equivalent statements, we obtain  $\rho(I_w) = \rho(I).$ The last assersion follows from \cite[Theorem 3.12]{jayanthan2022resurgence}.
\end{proof}

\begin{rmk}
    Let $I \subset R$ be a monomial ideal of big-height $h.$ Assume that $I$ has a standard linear weighting $w.$ Then $I = J_w$ for some squarefree monomial ideal $J \subset R.$ Note that $\al(I) \ge \al(J)$ and $\hat{\al}(I) \ge \hat{\al}(J).$ Using \cite[Theorem 5.3]{bocci2016waldschmidt}, we obtain
    $$\hat{\al}(I) \ge \hat{\al}(J) \ge \frac{\al(J) + h - 1}{h}.$$
    One might expect that a lower bound for $\hat{\al}(J)$ could provide a lower bound of the same form for $\hat{\al}(I).$ However, since the inequality $\al(I) \ge \al(J)$ implies 
    $$\frac{\al(J) + h - 1}{h} \le \frac{\al(I) + h - 1}{h},$$
    the above estimates cannot be combined to deduce
    $$\hat{\al}(I) \ge \frac{\al(I) + h - 1}{h}.$$ 
    This shows that a lower bound for $\hat{\al}(J)$ does not automatically translate into a lower bound of the same form for $\hat{\al}(I).$
\end{rmk}

In the following, we fix the notation used throughout the rest of the paper.

\begin{notation}\label{notation}
    Let $s \in \mathbb{N}_{+}$, and let $I \subset R$ be a monomial ideal without embedded primes. Further, let 
    $$I = \q_1 \cap \cdots \cap \q_r$$ 
    be the minimal irreducible decomposition of $I.$ That is $\sqrt{\q_j} \ne \sqrt{\q_k}$ for all $j \ne k$ and each $\q_j$ is an irreducible monomial ideal. Thus, for each $1 \le j \le r$, we have $\q_j = \langle x_i^{w_{j,i}} : i \in A_j \rangle$ for some $A_j \subset [n].$ If $\p_j = \sqrt{\q_j} = \langle x_i : i \in A_j \rangle$, then $\Ass(I) = \{\p_1, \dots, \p_r\}.$ Note that the edge ideal of the hypergraph $\h_I$ is $\e(\h_I) = \{A_1, \dots, A_r\}.$ 
\end{notation}

\begin{rmk}\label{R-WeightedIdeal}
    Let $I \subset R$ be a monomial ideal defined as in  Notation~\ref{notation}. Then we observe the following:
    \begin{enumerate}[(i)]
		\item By using \cite[Lemma 2.6]{MVV}, the $s$-th symbolic power of $I$ is $I^{(s)} = \bigcap_{j = 1}^{r} \langle x_i^{w_{j,i}} : i \in A_j \rangle^s.$
		\item Using \cite[Proposition 2.5]{faridi}, the polarization of $I$ is given by
		$$\tilde{I} = \bigcap_{j = 1}^{r} \bigcap_{\substack{i \in A_j \\ 1 \le d_i \le w_{j,i}}} \langle x_{i, d_i} : i \in A_j \rangle.$$
		By applying \cite[Lemma 2.6]{MVV}, the $s$-th symbolic power of $\tilde{I}$ is
		$$\tilde{I}^{(s)} = \bigcap_{j = 1}^{r} \bigcap_{\substack{i \in A_j \\ 1 \le d_i \le w_{j,i}}} \langle x_{i, d_i} : i \in A_j \rangle^s.$$

        \item If $I$ has a standard linear weighting, then there exist a squarefree monomial ideal $J$ and a standard linear weighting $w = (w_1, \dots, w_n)$ on $R$ such that $I = J_w.$ It follows from \cite[Lemma 4.1]{MVV} that
        $I = J_w = \bigcap_{j = 1}^{r} (\langle x_i : i \in A_j \rangle)_w = \bigcap_{j = 1}^{r} \langle x_i^{w_i} : i \in A_j \rangle.$ 
        %In view of \cite[ Corollary 1.3.4]{HBook}, there exist $A_1, \dots A_r \subset [n]$ with $A_j \not \subset A_k$ for all $j \ne k$ such that $J = \bigcap_{j = 1}^{r} \langle x_i : i \in A_j \rangle.$
	\end{enumerate} 
\end{rmk}
 
\begin{lemma}\label{L-WaldschmidtC}
	With Notation~\ref{notation}, if $I$ has a standard linear weighting, then $\al(I^{(s)}) = \al(\tilde{I}^{(s)}).$
    %Consequently, $\hat{\al}(I) = \hat{\al}(\tilde{I}).$
\end{lemma}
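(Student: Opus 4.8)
The plan is to compare the two symbolic powers $I^{(s)}$ and $\tilde I^{(s)}$ directly through their explicit intersection descriptions recorded in Remark~\ref{R-WeightedIdeal}, and to exhibit, for a minimal-degree generator of $I^{(s)}$, a monomial of the same total degree lying in $\tilde I^{(s)}$, and conversely. Since $I$ has a standard linear weighting, write $I = J_w$ with $w = (w_1,\dots,w_n)$ and $J = \bigcap_{j=1}^r \langle x_i : i \in A_j\rangle$ squarefree; thus $w_{j,i} = w_i$ for every $j$ and every $i \in A_j$. By Remark~\ref{R-WeightedIdeal}(i), $I^{(s)} = \bigcap_{j=1}^r \langle x_i^{w_i} : i \in A_j\rangle^s$, and by Remark~\ref{R-WeightedIdeal}(ii), since now the inner intersection over $1 \le d_i \le w_{j,i} = w_i$ is independent of $j$,
$$\tilde I^{(s)} = \bigcap_{j=1}^r \bigcap_{\substack{i \in A_j \\ 1 \le d_i \le w_i}} \langle x_{i,d_i} : i \in A_j\rangle^s.$$

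The key step is the following degree-preserving correspondence. First I would show $\al(I^{(s)}) \ge \al(\tilde I^{(s)})$: given a monomial $x^a \in I^{(s)}$ of degree $\al(I^{(s)})$, for each variable $x_i$ with exponent $a_i$ I distribute the $a_i$ copies of $x_i$ among the polarized variables $x_{i,1}, \dots, x_{i,w_i}$ in a balanced way (say, writing $a_i = q_i w_i + s_i$ with $0 \le s_i < w_i$, put exponent $q_i+1$ on $x_{i,1},\dots,x_{i,s_i}$ and $q_i$ on the rest), producing a monomial $x^{\tilde a}$ of the same total degree $\deg x^a$. One then checks membership: for a fixed $j$ and a fixed choice of $d_i \in \{1,\dots,w_i\}$ for $i \in A_j$, the hypothesis $x^a \in \langle x_i^{w_i} : i \in A_j\rangle^s$ says $\sum_{i \in A_j} \lfloor a_i / w_i \rfloor \ge s$; the balanced distribution guarantees that each $x_{i,d_i}$ receives at least $\lfloor a_i/w_i\rfloor$ copies, so $\sum_{i \in A_j}(\text{exponent of } x_{i,d_i} \text{ in } x^{\tilde a}) \ge s$, i.e.\ $x^{\tilde a} \in \langle x_{i,d_i} : i \in A_j\rangle^s$. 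Hence $x^{\tilde a} \in \tilde I^{(s)}$ and $\al(\tilde I^{(s)}) \le \deg x^{\tilde a} = \al(I^{(s)})$.

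For the reverse inequality $\al(\tilde I^{(s)}) \ge \al(I^{(s)})$, I would use that polarization does not change initial degrees together with specialization: the substitution $x_{i,d} \mapsto x_i$ sends $T \to R$, carries each generator $\langle x_{i,d_i} : i \in A_j\rangle^s$ of $\tilde I^{(s)}$ into $\langle x_i : i \in A_j\rangle^s$, and more carefully, taking the intersection over all $d_i$ simultaneously forces, for a monomial of $\tilde I^{(s)}$, the total $x_i$-degree after specialization to be at least $w_i$ times the number of "full blocks," yielding membership of the image in $I^{(s)} = \bigcap_j \langle x_i^{w_i} : i \in A_j\rangle^s$; since specialization preserves total degree, $\al(I^{(s)}) \le \al(\tilde I^{(s)})$. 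Combining the two inequalities gives the equality. The main obstacle I anticipate is the bookkeeping in the forward direction: one must verify that a single balanced distribution of exponents works simultaneously for every edge $A_j$ and every choice of indices $(d_i)_{i \in A_j}$, which is exactly where the standard-weighting hypothesis (all $w_{j,i}$ equal to a common $w_i$) is essential — without it the inner polarization blocks would have $j$-dependent lengths and no single distribution need serve all of them at once.
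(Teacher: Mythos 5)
Your proposal is correct and follows essentially the same route as the paper: compare $I^{(s)}$ and $\tilde I^{(s)}$ via their intersection descriptions from Remark~\ref{R-WeightedIdeal}, produce a degree-preserving polarized monomial in the forward direction (your balanced distribution of each exponent $a_i$ across the block $x_{i,1},\dots,x_{i,w_i}$ plays the role of the paper's $\tilde f = \lcm(\tilde f_1,\dots,\tilde f_r)$, and works because every $x_{i,d_i}$ receives at least $\lfloor a_i/w_i\rfloor$), and specialize $x_{i,d}\mapsto x_i$ in the reverse direction. The only point to make explicit in the reverse direction is the one the paper spells out: for each $j$, choose the indices $e_i$ minimizing the exponent of $x_{i,p}$ over $p$, so that membership in $\langle x_{i,e_i}: i\in A_j\rangle^s$ yields $\sum_{i\in A_j}\min_p t_{i,p}\ge s$ and hence $\sum_{i\in A_j}\lfloor(\sum_p t_{i,p})/w_i\rfloor\ge s$.
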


\begin{proof}
	For a monomial ideal $L$, $\al(L)$ is the smallest degree of a non zero monomial in $L.$ Suppose $f \in I^{(s)}$ is a monomial with $\deg(f) = \al(I^{(s)}).$ Then $f$ is a monomial generator of $I^{(s)}.$ Using Remark \ref{R-WeightedIdeal}, for each $1 \le j \le r$, there exists a monomial generator $f_j$ of $\langle x_i^{w_i} : i \in A_j \rangle^s$ such that $f = \lcm(f_1, \dots, f_r).$ Let $1 \le j \le r$ and $i \in A_j.$ Then there exists $s_{j, i} \in \mathbb{N}$ with $\sum_{i \in A_j} s_{j, i} = s$ such that
	$$f_j = \prod_{i \in A_j} x_i^{w_i s_{j, i}}.$$
	If $s_i = \max\{s_{j, i} : i \in A_j\}$, then $f = \prod_{i \in [n]} x_i^{w_i s_i}.$ 
	Further, let
	$$\tilde{f}_j = \prod_{i \in A_j} (x_{i,1} \cdots x_{i,w_i})^{s_{j, i}}$$
	and $\tilde{f} = \lcm(\tilde{f}_1, \dots, \tilde{f}_r).$ Note that $\tilde{f} = \prod_{i \in [n]} (x_{i,1} \cdots x_{i,w_i})^{s_i}$ and $\deg(\tilde{f}) = \deg(f) = \al(I^{(s)}).$ Then, for each $i \in A_j$ and $1 \le d_i \le w_i$, the monomial $\prod_{i \in A_j} x_{i,d_i}^{s_{j, i}}$ divides $\tilde{f}_j.$ Since 
	$$\prod_{i \in A_j} x_{i,d_i}^{s_{j, i}} \in \langle x_{i,d_i} : i \in A_j \rangle^s,$$ 
	we get $\tilde{f}_j \in \langle x_{i,d_i} : i \in A_j \rangle^s$ for all $1 \le d_i \le w_i.$ This implies that $$\tilde{f}_j \in \bigcap_{\substack{1 \le d_i \le w_i \\ i \in A_j}} \langle x_{i,d_i} : i \in A_j \rangle^s.$$ 
	Thus, by Remark \ref{R-WeightedIdeal}, $\tilde{f} \in \tilde{I}^{(s)}$, and hence $\al(I^{(s)}) \ge \al(\tilde{I}^{(s)}).$
	
	To prove reverse inequality, suppose $\tilde{g} \in \tilde{I}^{(s)}$ is a monomial with $\deg(\tilde{g}) = \al(\tilde{I}^{(s)}).$ Then $\tilde{g}$ is a monomial generator of $\tilde{I}^{(s)}.$ For $1 \le j \le r$, let $i \in A_j$ and $1 \le d_i \le w_i.$ Then, by Remark~\ref{R-WeightedIdeal}, there exists $\tilde{g}_{j,d} \in \langle x_{i, d_i} : i \in A_j \rangle^s$ such that 
    $$\tilde{g} = \lcm\left( \tilde{g}_{j,d} : 1 \le j \le r \text{ and } 1 \le d_i \le w_i \text{ for all } i \in A_j \right),$$ 
    where $d$ is the $|A_j|$-tuple $(d_i : i \in A_j).$ Let $s_{j, i, d_i} \in \mathbb{N}$ be such that $\sum_{i \in A_j} s_{j, i, d_i} = s$ and
	$$\tilde{g}_{j,d} = \prod_{i \in A_j} x_{i,d_i}^{s_{j, i, d_i}}.$$
	If for each $i \in [n]$ and $1 \le p \le w_i$, we set $t_{i, p} = \max\{s_{j, i, d_i} : i \in A_j \text{ and } d_i = p\}$, then 
	$$\tilde{g} = \prod_{i \in [n]} x_{i,1}^{t_{i,1}} \cdots x_{i,w_i}^{t_{i, w_i}}.$$ 
	Let $g = \prod_{i \in [n]} x_i^{t_{i,1} + \cdots + t_{i,w_i}}.$ Then $\deg(g) = \deg(\tilde{g}) = \al(\tilde{I}^{(s)}).$ We prove that $g \in I^{(s)}.$ Choose $1 \le e_i \le w_i$ such that $t_{i,e_i} = \min\{t_{i, 1}, \dots, t_{i, w_i}\}.$ The monomial $g' = \prod_{i \in [n]} x_i^{w_i t_{i,e_i}}$ divides $g.$ Let $g_j$ denote the monomial $\prod_{i \in A_j} x_i^{w_i s_{j, i, e_i}}.$ One can check that $g_j \in \langle x_i^{w_i} : i \in A_j \rangle^s.$ Since $s_{j, i, e_i} \le t_{i,e_i}$, it follows that $g_j$ divides $g'.$ This implies that $g' \in \langle x_i^{w_i} : i \in A_j \rangle^s$, and hence by Remark \ref{R-WeightedIdeal}, $g \in I^{(s)}.$ Consequently, $\al(\tilde{I}^{(s)}) \ge \al(I^{(s)}).$
\end{proof}

In the following example, Lemma \ref{L-WaldschmidtC} does not hold if $I$ lacks a standard linear weighting.

\begin{exmp}
    Let $I = \langle x_1, x_2^2 \rangle \cap \langle x_2, x_3^2 \rangle \subset K[x_1, x_2, x_3].$ Then $I$ does not have a standard linear weighting. In view of Remark \ref{R-WeightedIdeal}, we see that the polarization of $I$ is 
    $$\tilde{I} = \langle x_{1,1}, x_{2,1} \rangle \cap \langle x_{1,1}, x_{2,2} \rangle \cap \langle x_{2,1}, x_{3,1} \rangle \cap \langle x_{2,1}, x_{3,2} \rangle.$$
    Consider the monomial $f = \lcm(x_1x_2^2, x_2^2) \in I^{(2)}.$ As in the proof of Lemma \ref{L-WaldschmidtC}, the monomial $\tilde{f} \in \tilde{I}^{(2)}$ is given by $\tilde{f} = \lcm(x_{1,1}x_{2,1}x_{2,2}, x_{2,1}^2).$ A computation on Macaulay2 \cite{M2} shows that $\al(I^{(2)}) = \deg(f) = 3$ and $\al(\tilde{I}^{(2)}) = \deg(\tilde{f}) = 4.$ Thus $\al(I^{(2)}) \ne \al(\tilde{I}^{(2)}).$
\end{exmp}

\begin{thm}\label{Th-Waldschmidt}
   Let $I \subset R$ be a monomial ideal. If $I$ has a standard linear weighting and $h$ denotes the big-height of $I$, then Conjecture \ref{C1} holds for $I.$
	%$$\hat{\al}(I) \ge \frac{\al(I) + h - 1}{h}.$$
\end{thm}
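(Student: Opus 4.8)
The plan is to reduce the statement for $I$ to the known squarefree case via polarization. Since $I$ has a standard linear weighting, write $I = J_w$ for a squarefree monomial ideal $J$ and $w = (w_1, \dots, w_n)$, and let $\tilde I$ denote the polarization of $I$. The key input is Lemma~\ref{L-WaldschmidtC}, which gives $\al(I^{(s)}) = \al(\tilde I^{(s)})$ for every $s \ge 1$; dividing by $s$ and letting $s \to \infty$ yields $\hat{\al}(I) = \hat{\al}(\tilde I)$. Now $\tilde I$ is a squarefree monomial ideal (in the larger ring $T$), so by Bocci et al.\ \cite[Theorem 5.3]{bocci2016waldschmidt} we have $\hat{\al}(\tilde I) \ge \dfrac{\al(\tilde I) + \tilde h - 1}{\tilde h}$, where $\tilde h$ is the big-height of $\tilde I$. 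It therefore suffices to compare $\al(\tilde I)$ and $\tilde h$ with $\al(I)$ and $h$.

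First I would handle the big-height. From the description of $\tilde I$ in Remark~\ref{R-WeightedIdeal}(ii), the associated primes of $\tilde I$ are the ideals $\langle x_{i,d_i} : i \in A_j\rangle$ over choices $1 \le d_i \le w_{j,i}$, all of which have height $|A_j|$. Hence $\tilde h = \max_j |A_j| = h$, the big-height of $I$ (note $\height(\p_j) = |A_j|$ in Notation~\ref{notation}). Next I would show $\al(\tilde I) = \al(I)$: taking $s = 1$ in Lemma~\ref{L-WaldschmidtC} gives $\al(I) = \al(I^{(1)}) = \al(\tilde I^{(1)}) = \al(\tilde I)$. Substituting $\tilde h = h$ and $\al(\tilde I) = \al(I)$ into the Bocci et al.\ bound and combining with $\hat{\al}(I) = \hat{\al}(\tilde I)$ gives
$$\hat{\al}(I) = \hat{\al}(\tilde I) \ge \frac{\al(\tilde I) + \tilde h - 1}{\tilde h} = \frac{\al(I) + h - 1}{h},$$
which is exactly Conjecture~\ref{C1} for $I$.

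The main obstacle is the equality $\al(I^{(s)}) = \al(\tilde I^{(s)})$, but that is precisely the content of Lemma~\ref{L-WaldschmidtC}, already established in the excerpt; the remaining work is bookkeeping — verifying that polarization preserves big-height and that it preserves $\al$ (the $s=1$ case), and that Bocci et al.'s theorem applies to $\tilde I$ as a squarefree ideal in $T$. One subtlety worth a sentence is that $\al$, $\hat{\al}$, and big-height of $\tilde I$ are computed in the ring $T$ rather than $R$, but this causes no difficulty since these invariants depend only on the combinatorial data of the irreducible decomposition, which Remark~\ref{R-WeightedIdeal} makes explicit. Taking the limit $s\to\infty$ in Lemma~\ref{L-WaldschmidtC} to get $\hat{\al}(I) = \hat{\al}(\tilde I)$ also requires only that both Waldschmidt constants exist, which is standard.
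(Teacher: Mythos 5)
Your proposal matches the paper's proof: both reduce to the squarefree case via polarization, using Lemma~\ref{L-WaldschmidtC} to get $\hat{\al}(I) = \hat{\al}(\tilde I)$, noting that polarization preserves $\al$ and the big-height, and then applying \cite[Theorem 5.3]{bocci2016waldschmidt} to $\tilde I$. The extra bookkeeping you supply (the $s=1$ case and the height of the associated primes of $\tilde I$) is exactly what the paper leaves implicit, so the argument is correct and essentially identical.
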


\begin{proof}
	By Lemma \ref{L-WaldschmidtC}, we have $\al(I^{(s)}) = \al(\tilde{I}^{(s)})$ for all $s \in \mathbb{N}_{+}$, and hence $\hat{\al}(I) = \hat{\al}(\tilde{I}).$ Note that the big-height of $\tilde{I}$ is the same as the big-height of $I.$ Using \cite[Theorem 5.3]{bocci2016waldschmidt}, we obtain
	$$\hat{\al}(I) = \hat{\al}(\tilde{I}) \ge \frac{\al(\tilde{I}) + h - 1}{h} = \frac{\al(I) + h - 1}{h}.$$
\end{proof}

\begin{defn}\label{D-Whisker}
    Let $\h$ be a hypergraph with vertex set $[n]$ and edge set $\e(\h) = \{A_1, \dots, A_r\}.$ The hypergraph $\h_I$ is said to be \textit{whiskered} if there exist mutually disjoint edges $A_{j_1}, \dots, A_{j_t}$ and a subset $V_1 = \{i_1, \dots, i_t\} \subset [n]$ such that
\begin{itemize}
    \item $A_j \subset V_1$ for all $j \ne j_1, \dots, j_t$; and

    \item $V_1 \cap A_{j_k} = \{i_k\}$ for all $1 \le k \le t.$  
\end{itemize}
We say that $\h_I$ is a whiskered hypergraph with whiskers $A_{j_1}, \dots, A_{j_t}$ attached to the vertices $i_1, \dots, i_t$, respectively.  %$A_{j_k}$ is the \textit{whisker} attached to the vertex $i_k.$ Observe that $V_1 = \{i_1, \dots, i_t\}.$ 
\end{defn}

\begin{exmp}
    Consider the hypergraph $\h$ with edge set
    $$\e(\h) = \left\{ \{1, 2, 3\}, \{2, 3, 4\}, \{1,5\}, \{2, 6, 7\}, \{3, 8\}, \{4, 9, 10\} \right\}.$$ If $A_{j_1} = \{1,5\}$, $A_{j_2} = \{2, 6, 7\}$, $A_{j_3} = \{3, 8\}$ and $A_{j_4} = \{4, 9, 10\}$, then $A_{j_1}, A_{j_2}, A_{j_3}$, $A_{j_4}$ are the whiskers attached on the vertices of $V_1 = \{1, 2, 3, 4\}.$ Thus, $\h$ is a whiskered hypergraph.
\end{exmp}

\begin{thm}\label{Th-Whisker}
  Let $I \subset R$ be a monomial ideal defined as in Notation \ref{notation}. Suppose $\h_I$ is a whiskered hypergraph with whiskers $A_{j_1}, \dots, A_{j_t}$ attached to the vertices $i_1, \dots, i_t$, respectively. Assume that the following conditions hold:
    \begin{enumerate}[\rm(i)]
        %\item $V_1 = \{i_1, \dots, i_t\}$ with $V_1 \cap A_{j_k} = \{i_k\}$,
        \item $w_{j_k, i} \ge w_{j_k, i_k}$ for all $1 \le k \le t$ and $i \in A_{j_k}$, and
        
        \item for each $j \ne j_1, \dots, j_t$, there exists $1 \le k \le t$ with $i_k \in A_j$ such that $w_{j, i_k} \le w_{j_k, i_k}.$
    \end{enumerate}
    If $h$ is the big-height of $I$, then Conjecture \ref{C1} holds for $I.$
\end{thm}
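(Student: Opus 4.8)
The plan is to prove the sharper statement $\hat{\al}(I)\ge\al(I)$; Conjecture~\ref{C1} then follows immediately, since for any proper ideal $\al(I)\ge 1$ forces $h\,\al(I)\ge\al(I)+h-1$, that is, $\al(I)\ge\frac{\al(I)+h-1}{h}$ for every $h\ge 1$. The quantity linking the two sides will be $\alpha_0:=\sum_{k=1}^{t}w_{j_k,i_k}$, the total weight carried by the attaching variables of the whiskers; I will show $\al(I)\le\alpha_0$ and $\hat{\al}(I)\ge\alpha_0$, which together give $\hat{\al}(I)\ge\alpha_0\ge\al(I)$.

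For $\al(I)\le\alpha_0$ I would exhibit the monomial $m=\prod_{k=1}^{t}x_{i_k}^{w_{j_k,i_k}}$ and verify $m\in\q_j$ for every $j$. If $j=j_k$, then $x_{i_k}^{w_{j_k,i_k}}$ divides $m$ and is a minimal generator of $\q_{j_k}$, so $m\in\q_{j_k}$. If $j\notin\{j_1,\dots,j_t\}$, then $A_j\subset V_1$, and hypothesis (ii) provides an index $k$ with $i_k\in A_j$ and $w_{j,i_k}\le w_{j_k,i_k}$; hence $x_{i_k}^{w_{j,i_k}}$ divides $x_{i_k}^{w_{j_k,i_k}}\mid m$ and lies in $\q_j$, giving $m\in\q_j$. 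Thus $m\in I$ and $\al(I)\le\deg m=\alpha_0$.

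For $\hat{\al}(I)\ge\alpha_0$ I would show $\al(I^{(s)})\ge s\,\alpha_0$ for every $s\ge 1$ and let $s\to\infty$. By Remark~\ref{R-WeightedIdeal}(i), $I^{(s)}=\bigcap_{j=1}^{r}\q_j^{s}$. Let $x^{a}\in I^{(s)}$ be a monomial. For each whisker $A_{j_k}$, membership of $x^{a}$ in $\q_{j_k}^{s}=\langle x_i^{w_{j_k,i}}:i\in A_{j_k}\rangle^{s}$ means $x^{a}$ is divisible by $\prod_{i\in A_{j_k}}x_i^{w_{j_k,i}b_i}$ for some $b_i\in\mathbb{N}$ with $\sum_{i\in A_{j_k}}b_i=s$, so by hypothesis (i)
\[
\sum_{i\in A_{j_k}}a_i\ \ge\ \sum_{i\in A_{j_k}}w_{j_k,i}\,b_i\ \ge\ w_{j_k,i_k}\sum_{i\in A_{j_k}}b_i\ =\ s\,w_{j_k,i_k}.
\]
Since the whisker edges $A_{j_1},\dots,A_{j_t}$ are pairwise disjoint (part of the definition of a whiskered hypergraph), summing over $k$ yields $\deg(x^{a})=\sum_{i}a_i\ge s\sum_{k=1}^{t}w_{j_k,i_k}=s\,\alpha_0$. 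Hence $\al(I^{(s)})\ge s\,\alpha_0$, so $\hat{\al}(I)=\lim_{s\to\infty}\al(I^{(s)})/s\ge\alpha_0$, and combining with the previous step $\hat{\al}(I)\ge\alpha_0\ge\al(I)\ge\frac{\al(I)+h-1}{h}$.

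The argument is short, so the main point is conceptual rather than a computational obstacle: one has to recognise that under these hypotheses $\al(I)$, $\hat{\al}(I)$, and $\alpha_0$ all coincide, and that (ii) is exactly the condition making the candidate minimal monomial $m$ lie in the components $\q_j$ with $A_j\subset V_1$, while (i) is exactly what forces $\sum_{i\in A_{j_k}}a_i\ge w_{j_k,i_k}$ for each whisker; the disjointness built into ``whiskered hypergraph'' is then what lets the per-whisker estimates be added without overcounting. It remains to handle a few boundary cases (whiskers of size one, the degenerate case $r=t$ in which (ii) is vacuous, and the bound $\al(I)\ge 1$ coming from $I\subsetneq R$), all of which are routine.
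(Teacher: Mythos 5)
Your proof is correct, and while the lower bound $\al(I^{(s)})\ge s\,\alpha_0$ is essentially the same estimate the paper obtains (the paper works with monomial generators of $J^{(s)}$ for $J=\q_{j_1}\cap\cdots\cap\q_{j_t}$ and uses the disjointness of the whiskers together with condition (i) in exactly the way you do), the second half of your argument is genuinely different. The paper matches its lower bound against the \emph{polarization}: it builds the squarefree monomial $\tilde f=\prod_{k}(x_{i_k,1}\cdots x_{i_k,w_{j_k,i_k}})^{s}$ of degree $s\,\alpha_0$ in $\tilde I^{(s)}$ (using condition (ii)), deduces $\al(I^{(s)})\ge\al(\tilde I^{(s)})$ and hence $\hat\al(I)\ge\hat\al(\tilde I)$, checks $\al(I)=\al(\tilde I)$, and then invokes the Bocci et al.\ theorem for squarefree ideals applied to $\tilde I$. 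You instead depolarize that witness: your $m=\prod_k x_{i_k}^{w_{j_k,i_k}}$ sits in $I$ itself (condition (ii) playing the same role), giving $\al(I)\le\alpha_0\le\hat\al(I)$, and the conjectured inequality then follows from the trivial arithmetic fact that $\al(I)\ge\frac{\al(I)+h-1}{h}$ whenever $\al(I)\ge 1$. Your route avoids polarization and the external squarefree result entirely, and it proves strictly more: since $\hat\al(I)\le\al(I)$ always holds (as $I^{s}\subset I^{(s)}$), you have in fact shown $\hat\al(I)=\al(I)=\sum_{k}w_{j_k,i_k}$ for this class of ideals, which is a cleaner statement than the theorem itself. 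The boundary cases you flag (size-one whiskers, $r=t$, $\al(I)\ge1$) are indeed routine and cause no trouble.
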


\begin{proof}
    Let $s \in \mathbb{N}_{+}$ and $\ell = s (w_{j_1, i_1} + \cdots + w_{j_t, i_t}).$ First we prove $\al(I^{(s)}) \ge \ell.$ Consider the monomial ideal $J = \q_{j_1} \cap \cdots \cap \q_{j_t}.$ By using \cite[Lemma 2.6]{MVV}, we get $J^{(s)} = \q_{j_1}^s \cap \cdots \cap \q_{j_t}^s.$ 
    If $g$ is a monomial generator of $J^{(s)}$, then for each $1 \le k \le t$, there exists $g_k = \prod_{i \in A_{j_k}} x_i^{w_{j_k, i} s_{k, i}} \in \q_{j_k}^s$ such that $g = \lcm(g_1, \dots, g_t)$, where for each $i \in A_{j_k}$, $s_{k, i} \in \mathbb{N}$ and $\sum_{i \in A_{j_k}} s_{k, i} = s.$ Since $A_{j_1}, \dots A_{j_t}$ are the whiskers of $\h_I$, we obtain that $A_{j_k} \cap A_{j_l} = \emptyset$ for all $k \ne l.$ This implies that $g = g_1 \cdots g_t$ and $\deg(g) = \sum_{k = 1}^{t} \sum_{i \in A_{j_k}} w_{j_k, i} s_{k, i}.$ Using condition (i), we obtain 
    $$\deg(g) \ge \sum_{k = 1}^{t} \sum_{i \in A_{j_k}} w_{j_k, i_k} s_{k, i} = \ell.$$
    This proves that $\al(J^{(s)}) \ge \ell.$ Since $I^{(s)} \subset J^{(s)}$, it follows that $\al(I^{(s)}) \ge \al(J^{(s)}) \ge \ell.$ Next, we prove that $\ell \ge \al(\tilde{I}^{(s)}).$ Let $\tilde{f} = \prod_{k = 1}^{t} (x_{i_k, 1} \cdots x_{i_k, w_{j_k, i_k}})^s.$ Then $\deg(\tilde{f}) = \ell.$ We claim that $\tilde{f} \in \tilde{I}^{(s)}.$ Let $1 \le j \le r$, and for each $i \in A_j$, let $1 \le d_i \le w_{j, i}.$ In view of Remark \ref{R-WeightedIdeal}, it suffices to show that $\tilde{f} \in  \langle x_{i, d_i} : i \in A_j \rangle^s.$ We consider the following possibilities.
    \begin{itemize}
        \item Assume that $j = j_k$ for some $1 \le k \le t.$ Observe that $x_{i_k, d_{i_k}}^s \in  \langle x_{i, d_i} : i \in A_j \rangle^s.$ Since $x_{i_k, d_{i_k}}^s \mid \tilde{f}$, it follows that $\tilde{f} \in  \langle x_{i, d_i} : i \in A_j \rangle^s.$

        \vspace{3mm}
        \item Assume that $j \ne j_k$ for all $1 \le k \le t.$ By using condition (ii), there exists $1 \le k \le t$ with $i_k \in A_j$ such that $w_{j, i_k} \le w_{j_k, i_k}.$ Thus $d_{i_k} \le w_{j_k, i_k}$ and $x_{i_k, d_{i_k}}^s \in  \langle x_{i, d_i} : i \in A_j \rangle^s.$ This implies that $x_{i_k, d_{i_k}}^s \mid \tilde{f}$, which further implies  $\tilde{f} \in  \langle x_{i, d_i} : i \in A_j \rangle^s.$
    \end{itemize}
  
    Hence $\tilde{f} \in \tilde{I}^{(s)}.$ This proves that $\ell = \deg(\tilde{f}) \ge \al(\tilde{I}^{(s)}).$ By combining our results, we obtain that $\al(I^{(s)}) \ge \al(\tilde{I}^{(s)}).$ Consequently, $\hat{\al}(I) \ge \hat{\al}(\tilde{I}).$ One can check that $\al(I) = \al(\tilde{I}).$ Now, it follows from \cite[Theorem 5.3]{bocci2016waldschmidt} that
	$$\hat{\al}(I) \ge \hat{\al}(\tilde{I}) \ge \frac{\al(\tilde{I}) + h - 1}{h} = \frac{\al(I) + h - 1}{h}.$$
\end{proof}

\section{The Mendez-Pinto-Villarreal Conjecture}\label{S-Simis}

In this section, we prove that Conjecture \ref{C2} holds for some classes of monomial ideals. We begin by fixing the notation and basic setup for this section. For a matrix of integers $M$, we write $M^{\top}$ to denote its transpose. For $a, b \in \mathbb{N}^p$, we write $a \le b$ if $a_i \le b_i$ for all $1 \le i \le p.$

\begin{set}\label{setup}
    Let $I \subset R$ be a monomial ideal without embedded primes.
	With Notation~\ref{notation}, the minimal irreducible decomposition of $I$ is $I = \q_1 \cap \cdots \cap \q_r$, where for each $1 \le j \le r$, we have $\q_j = \langle x_i^{w_{j,i}} : i \in A_j \rangle$,  $\p_j = \sqrt{\q_j}$, and $A_j \subset [n].$ Also, we have $\Ass(I) = \{\p_1, \dots, \p_r\}.$ Without loss of generality, we may assume that $[n] = A_1 \cup \cdots \cup A_r.$ Let $X_I$ denote the set 
	$$\{(k, l) : 1 \le k < l \le r, \text{ there exists } i \in A_k \cap A_l \text{ with } w_{k, i} \ne w_{l, i}\}.$$ 
	For convenience, we write $A = A_1 \times \cdots \times A_r$, and for $i \in [n]$, we define 
	$$B_i = \{a \in A : \pi_j(a) = i \text{ for some } 1 \le j \le r\},$$
	where $\pi_j : A \to A_j$ is the $j$-th projection map. If $a \in A$, then we write $\pi_j(a) = a_j.$ Thus, we have $a = (a_1, \dots, a_r).$ By fixing an order on the elements of the set $A$, we regard $\mathbb{N}^{p \times |A|}$ as the set of all $p \times |A|$ matrices with non-negative entries, whose columns are indexed by the elements of $A.$ We associate to $I$ a matrix $P_I \in \mathbb{N}^{n \times |A|}$ defined as $P_I = [c_{i,a}]$, where 
	$$c_{i,a} = \max\{w_{j,a_j} : 1 \le j \le r,\; i = a_j\}$$ 
	for $a \in B_i$, and $c_{i,a} = 0$ for $a \notin B_i.$
\end{set}

\begin{rmk}\label{R-SLW}
     With the notation of Set-up \ref{setup}, if $X_I = \emptyset$, then $w_{j,i} = w_{k,i}$ for all $j \ne k.$ For each $i \in [n]$, let $w_i = w_{j,i}$, where $1 \le j \le r$ with $i \in A_j.$ Then $w = (w_1, \dots, w_n)$ is a standard linear weighting on $R.$ Also, if $J = \sqrt{I}$, then $I = J_w$ (see Remark \ref{R-WeightedIdeal}).
\end{rmk}

\begin{lemma}\label{L-MatrixSystem}
	With the notation of Set-up \ref{setup}, a monomial $f = x_1^{\al_1} \cdots x_n^{\al_n} \in I^t$ if and only if there exists $[m_a] \in \mathbb{N}^{1 \times |A|}$ such that $\sum_{a \in A} m_a = t$ and 
	$$P_I [m_a]^{\top} \le [\al_1 \cdots \al_n]^{\top}.$$ 
\end{lemma}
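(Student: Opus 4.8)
The statement is essentially a combinatorial translation: membership of a monomial in $I^t$ means it is divisible by a product of $t$ generators of $I$, one factor chosen from each "layer". I need to connect the generators of $I^t$ to the columns of $P_I$. The key observation is that since $I = \q_1 \cap \cdots \cap \q_r$ with $\q_j = \langle x_i^{w_{j,i}} : i \in A_j\rangle$, a monomial generator of $I$ is the lcm of a choice of one generator $x_{a_j}^{w_{j,a_j}}$ from each $\q_j$, i.e. it is indexed by a tuple $a = (a_1,\dots,a_r) \in A = A_1\times\cdots\times A_r$, and that lcm is exactly $\prod_{i\in[n]} x_i^{c_{i,a}}$ where $c_{i,a} = \max\{w_{j,a_j} : i = a_j\}$ — precisely the $a$-th column of $P_I$. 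So the monomial generators of $I$ are indexed (perhaps with repetition) by the columns of $P_I$.

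First I would record this description of the monomial generators of $I$ carefully, citing the description of irreducible decompositions and the fact that the minimal generators of an intersection of monomial ideals are obtained as lcm's of tuples of generators of the factors. Next, for the "only if" direction: if $f \in I^t$ then $f$ is divisible by some product $g_1\cdots g_t$ of (not necessarily distinct) monomial generators of $I$; each $g_k$ corresponds to some column $a^{(k)}$ of $P_I$, so $\deg_{x_i}(g_1\cdots g_t) = \sum_{k=1}^t c_{i,a^{(k)}}$. Collecting equal columns, set $m_a = \#\{k : a^{(k)} = a\}$; then $[m_a]\in\mathbb{N}^{1\times|A|}$, $\sum_a m_a = t$, and the $x_i$-exponent of $g_1\cdots g_t$ is the $i$-th entry of $P_I[m_a]^\top$, which is $\le \al_i$ because $g_1\cdots g_t \mid f$. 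That gives $P_I[m_a]^\top \le [\al_1\cdots\al_n]^\top$. For the "if" direction: given such $[m_a]$, form the monomial $g = \prod_{a\in A}(\prod_{i\in[n]} x_i^{c_{i,a}})^{m_a}$, which is a product of $t$ monomial generators of $I$ (namely each column-generator $a$ taken $m_a$ times), hence $g \in I^t$; the inequality $P_I[m_a]^\top \le [\al_1\cdots\al_n]^\top$ says exactly $\deg_{x_i}(g) \le \al_i$ for all $i$, so $g \mid f$ and therefore $f \in I^t$.

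The only genuinely delicate point is the first direction's appeal to the structure of $I^t$: one must be sure that $f\in I^t$ implies $f$ is divisible by a product of $t$ monomial \emph{minimal} generators of $I$ (true for monomial ideals: $I^t$ is generated as a monomial ideal by $t$-fold products of generators of $I$, and a monomial lies in a monomial ideal iff it is divisible by one of its generators), and that every monomial generator of $I$ arises as the lcm-of-a-tuple construction, i.e. corresponds to some $a\in A$ — possibly non-minimal tuples give non-minimal generators, but that is harmless since we only need divisibility, not minimality, and including redundant columns of $P_I$ does not affect the statement. I expect this bookkeeping — matching column indices with tuples and handling repetitions via the multiplicities $m_a$ — to be the main thing to get right; the rest is immediate from unwinding the definition of $P_I$ in Set-up \ref{setup}.
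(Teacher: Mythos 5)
Your proposal is correct and follows essentially the same route as the paper: identify the monomial generators of $I$ with the tuples $a \in A$ via the lcm construction (whose exponent vector is exactly the $a$-th column of $P_I$), then translate divisibility of $f$ by a $t$-fold product of such generators into the multiplicity vector $[m_a]$ and the matrix inequality. Your extra care about non-minimal generators being harmless is a correct observation that the paper leaves implicit.
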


\begin{proof}
	Notice that
	$$I = \langle \lcm \left(x_{a_1}^{w_{1, a_1}}, \dots, x_{a_r}^{w_{r, a_r}} \right) : (a_1, \dots, a_r) \in A \rangle.$$
	First suppose that $f \in I^t.$ Then, for each $a = (a_1, \dots, a_r) \in A$, there exists an integer $m_a \ge 0$ with $\sum_{a \in A} m_a = t$ such that the monomial
	$$g = \prod_{a \in A} \left( \lcm 
	\left(x_{a_1}^{w_{1, a_1}}, \dots, x_{a_r}^{w_{r, a_r}} \right) \right)^{m_a}$$
	divides $f.$ One can observe that $g = x_1^{\beta_1} \cdots x_n^{\beta_n}$, where $\beta_i = \sum_{a \in A} c_{i, a} m_a.$ Since $g \mid f$, it follows that $\beta_i = \sum_{a \in A} c_{i, a} m_a \le \al_i$ for all $1 \le i \le n.$ This implies that $P_I [m_a]^{\top} \le [\al_1 \cdots \al_n]^{\top}.$
	
	Conversely, suppose that there exists $[m_a] \in \mathbb{N}^{1 \times |A|}$ such that $\sum_{a \in A} m_a = t$ and 
	$$P_I [m_a]^{\top} \le [\al_1 \cdots \al_n]^{\top}.$$ 
    If $\beta_i = \sum_{a \in A} c_{i, a} m_a$, then the monomial $g = x_1^{\beta_1} \cdots x_n^{\beta_n}$ divide $f$ and $g \in I^t.$ Thus $f \in I^t.$
\end{proof}

\begin{lemma}\label{L-General}
	With the notation of Set-up \ref{setup}, if $X_I \ne \emptyset$ and for each $(k, l) \in X_I$, it holds that $A_j \not\subset A_k \cup A_l$ for every $j \in [r] \setminus \{k, l\}$, then $I$ is not a Simis ideal. 
\end{lemma}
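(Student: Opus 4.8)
The plan is to exhibit a single level $s$ and a monomial $f$ with $f \in I^{(s)} \setminus I^{s}$, which by definition shows that $I$ is not Simis. Fix a pair $(k,l) \in X_{I}$ (it exists because $X_{I} \neq \emptyset$, and the isolation hypothesis $A_{j} \not\subset A_{k}\cup A_{l}$ for $j \neq k,l$ is then available for this pair), and fix $i_{0} \in A_{k}\cap A_{l}$ with $w_{k,i_{0}} \neq w_{l,i_{0}}$; after interchanging $k$ and $l$ if necessary, assume $a := w_{k,i_{0}} < w_{l,i_{0}} =: b$. Set $L := \lcm(a,b)$, $p := L/a$, $q := L/b$, so that $p > q \ge 1$ and $p-q = (b-a)/\gcd(a,b)$ is a positive integer. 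I will take $s := p$. Since $\sqrt{\q_{k}} \neq \sqrt{\q_{l}}$ and $I$ has no embedded primes, no $A_{j}$ contains another, so $A_{l}\setminus A_{k} \neq \emptyset$; fix $i_{1} \in A_{l}\setminus A_{k}$ (automatically $i_{1}\neq i_{0}$).

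Now define the witness. Let $M := p\cdot\max\{w_{j,i} : 1\le j\le r,\ i\in A_{j}\}$ and define $\alpha \in \mathbb{N}^{n}$ by $\alpha_{i_{0}} = L$, $\alpha_{i_{1}} = (p-q)\,w_{l,i_{1}}$, $\alpha_{i} = M$ for $i \notin A_{k}\cup A_{l}$, and $\alpha_{i} = 0$ otherwise; put $f = x^{\alpha}$. To check $f \in I^{(s)}$ I use Remark~\ref{R-WeightedIdeal}(i): it suffices that $\sum_{i\in A_{j}}\lfloor \alpha_{i}/w_{j,i}\rfloor \ge s$ for every $j$. For $j = k$ this sum equals $\lfloor L/a\rfloor = p = s$; for $j = l$ it equals $\lfloor L/b\rfloor + \lfloor (p-q)w_{l,i_{1}}/w_{l,i_{1}}\rfloor = q + (p-q) = s$; and for $j \neq k,l$ the isolation hypothesis furnishes some $i^{*} \in A_{j}\setminus(A_{k}\cup A_{l})$, so the sum is at least $\lfloor M/w_{j,i^{*}}\rfloor \ge p = s$. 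Hence $f \in I^{(s)}$.

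The heart of the argument is $f \notin I^{s}$, which I establish by a restriction trick. Write $S := A_{k}\cup A_{l}$ and, for a monomial $m$, let $m|_{S}$ be the factor of $m$ supported on $S$; restriction is multiplicative and respects divisibility. Note $f|_{S} = x^{\alpha'}$ where $\alpha'$ is $\alpha$ truncated to $S$, so $\alpha'$ is supported on $\{i_{0},i_{1}\}$. Since $I$ is generated by the monomials $g_{a} = \lcm(x_{a_{1}}^{w_{1,a_{1}}},\dots,x_{a_{r}}^{w_{r,a_{r}}})$ with $a \in A$, and for each such $a$ the factors $x_{a_{k}}^{w_{k,a_{k}}}$ and $x_{a_{l}}^{w_{l,a_{l}}}$ already lie in $S$, the restriction $g_{a}|_{S}$ is divisible by $\lcm(x_{a_{k}}^{w_{k,a_{k}}}, x_{a_{l}}^{w_{l,a_{l}}})$, a generator of $\q_{k}\cap\q_{l}$. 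Consequently, if $f$ were in $I^{s}$, then dividing $f$ by a product of $s$ generators of $I$ and restricting to $S$ would show $x^{\alpha'} \in (\q_{k}\cap\q_{l})^{s}$. But any generator of $\q_{k}\cap\q_{l}$ dividing a monomial supported on $\{i_{0},i_{1}\}$ has support inside $\{i_{0},i_{1}\}$, and since $i_{1}\notin A_{k}$ it must be $x_{i_{0}}^{b}$ or $x_{i_{0}}^{a}x_{i_{1}}^{w_{l,i_{1}}}$; so a product of $s = p$ of them has the form $x_{i_{0}}^{cb+(p-c)a}\,x_{i_{1}}^{(p-c)w_{l,i_{1}}}$ for some $0\le c\le p$. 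Dividing $x^{\alpha'}$ forces $(p-c)w_{l,i_{1}} \le (p-q)w_{l,i_{1}}$, hence $c \ge q \ge 1$, together with $cb+(p-c)a \le L$. But $cb+(p-c)a = pa + c(b-a) = L + c(b-a) \ge L + (b-a) > L$, a contradiction. Therefore $f \notin I^{s}$, and $I^{(s)}\neq I^{s}$, so $I$ is not a Simis ideal.

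I expect the main obstacle to be locating the right exponent $L = \lcm(a,b)$ and level $s = L/a$: these are calibrated so that $x_{i_{0}}^{L}$ lies on the boundary of $\q_{k}^{s}$ yet falls just short of $\q_{l}^{s}$, the deficit being repaired by the minimal possible contribution $(p-q)w_{l,i_{1}}$ at the single extra vertex $i_{1}$, while still every factorization inside $\q_{k}\cap\q_{l}$ overshoots the $x_{i_{0}}$–exponent. The remaining points to handle carefully are that the no-embedded-primes hypothesis genuinely supplies $i_{1}\in A_{l}\setminus A_{k}$, and that the restriction-to-$S$ step is legitimate, i.e. every generator of $I$ restricts into $\q_{k}\cap\q_{l}$; the isolation hypothesis $A_{j}\not\subset A_{k}\cup A_{l}$ is precisely what makes membership of $f$ in $\q_{j}^{s}$ automatic for $j\neq k,l$.
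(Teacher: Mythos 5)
Your proof is correct, and while it shares the inevitable top-level strategy with the paper (exhibit an explicit monomial $f\in I^{(s)}\setminus I^{s}$), both the witness and the non-membership argument are genuinely different. The paper sets $w_1,w_2$ to be the minimum and maximum weights at $i_0$ over \emph{all} components containing $i_0$, takes $s=\alpha+1$ with $w_2=\alpha w_1+\beta$, $0\le\beta<w_1$, and proves $f\notin I^{s}$ via Lemma \ref{L-MatrixSystem}: it assumes a solution $[m_a]$ of the linear system attached to the matrix $P_I$ and derives a contradiction through a partition of $A$ into sets $B$, $P$, $Q$ and two chained inequalities at the coordinates $i_0$ and $i_s$. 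You instead work only with the two weights $a=w_{k,i_0}<b=w_{l,i_0}$, calibrate $s=\lcm(a,b)/a$, and prove non-membership by restricting monomials to the variables indexed by $S=A_k\cup A_l$, which collapses the problem to a two-generator computation in $(\q_k\cap\q_l)^{s}$ where the only admissible factors are $x_{i_0}^{b}$ and $x_{i_0}^{a}x_{i_1}^{w_{l,i_1}}$; your bookkeeping of the exponent of $x_{i_0}$ then closes the argument in two lines. All the supporting steps check out: $i_1\in A_l\setminus A_k$ exists because the associated primes are pairwise incomparable, the padding exponent $M$ together with the isolation hypothesis handles every $\q_j^{s}$ with $j\ne k,l$, and restriction to $S$ is multiplicative and divisibility-preserving. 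Your route is shorter and avoids both the relabeling and the matrix machinery; the paper's choice $s=\alpha+1$ is generally smaller than your $s=\lcm(a,b)/a$ (e.g.\ $a=2$, $b=3$ gives $s=2$ versus $s=3$), which the paper exploits later when it recycles the same construction with $\alpha=1$ to get the sharper statement $I^{(2)}\not\subset I^{2}$ in Lemma \ref{L-General2}; your construction would not specialize to $s=2$ in the same way.
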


\begin{proof}
	Choose $(k, l) \in X_I$ and $i_0 \in A_k \cap A_l.$ If we set $w_1 = \min\{w_{j, i_0} : 1 \le j \le r,\; i_0 \in A_j\}$ and $w_2 = \max\{w_{j, i_0} : 1 \le j \le r,\; i_0 \in A_j\}$, then $w_1 < w_2.$ By relabeling, we can choose $1 \le t < s \le r$ such that
	\begin{itemize}
		\item $i_0 \in A_1 \cap \cdots \cap A_s$; and 
		
		\item $w_{j, i_0} = w_1$ for all $1 \le j \le t$, $w_{j, i_0} > w_1$ for all $t + 1 \le j \le s$ and $w_{s, i_0} = w_2.$
	\end{itemize}
	Note that $(1, s) \in X_I.$ For each $t + 1 \le j \le r$ with $j \ne s$, let $i_j \in A_j \setminus (A_1 \cup A_s)$ and $i_s \in A_s \setminus A_1.$ By the division algorithm, we write $w_2 = \al w_1 + \beta$, where $0 \le \beta < w_1.$ We consider the monomial $f =  \lcm \left( f_1, \dots, f_r \right)$, where for each $1 \le j \le r$, 
	$$f_j = \begin{cases}
		x_{i_0}^{(\al + 1) w_1} & \text{ if } 1 \le j \le t;  \\ 
		\vspace{-2mm} \\
		x_{i_0}^{w_{j, i_0}} x_{i_j}^{\al w_{j, i_j}} &  \text{ if } t + 1 \le j \le s; \\
		\vspace{-2mm} \\
		x_{i_j}^{(\al + 1) w_{j, i_j}} &  \text{ if } s < j \le r.	
	\end{cases}$$
	Then $f \in I^{(\al + 1)}$ and $\supp(f) = \{i_0, i_{t + 1}, \dots, i_r\}.$ If $f = x_1^{\gamma_1} \cdots x_n^{\gamma_n}$, then $\gamma_{i_0} = (\al + 1) w_1$ and $\gamma_{i_s} = \al w_{s, i_s}.$ We prove that $f \notin I^{\al + 1}.$ To get a contradiction, suppose that $f \in I^{\al + 1}.$ Then, by Lemma \ref{L-MatrixSystem}, there exists $[m_a] \in \mathbb{N}^{1 \times |A|}$ such that $\sum_{a \in A} m_a = \al + 1$ and 
	$$P_I [m_a]^{\top} \le [\al_1 \cdots \al_n]^{\top}.$$
	This implies that
	\begin{equation}\label{INQ, 1}
		\sum_{a \in B_{i_0}} m_a c_{i_0, a} \le (\al + 1) w_1
	\end{equation}
	and
	\begin{equation}\label{INQ, t+1}
		\sum_{a \in B_{i_s}} m_a c_{i_s, a} \le \al w_{s, i_s}.
	\end{equation}
	Let $B = \{a \in A : a_j \in \supp(f) \text{ for all } j \in [r]\}.$ Then, we observe that $m_a = 0$ for all $a \in A \setminus B.$ By using $\sum_{a \in A} m_a = \al + 1$, we obtain $\sum_{a \in B} m_a = \al + 1.$ If $a \in B$, then $a_1 \in \supp(f).$ Since $i_j \notin A_1$ for all $t + 1 \le j \le r$, it follows that $a_1 = i_0.$ This implies that $a \in B_{i_0}$, and hence $B \subset B_{i_0}.$ If we set $P = \{a \in B : c_{i_0, a} = w_1\}$ and $Q = \{a \in B : c_{i_0, a} > w_1\}$, then $B = P \cup Q.$
	In light of Inequality \eqref{INQ, 1}, we see that
	\begin{align*}
		(\al + 1) w_1 & \ge \sum_{a \in B} m_a c_{i_0, a} + \sum_{a \in B_{i_0} \setminus B} m_a c_{i_0, a} \\
		& \ge \sum_{a \in P} m_a w_1 + \sum_{a \in Q} m_a (w_1 + 1) \\
		& = \sum_{a \in B} m_a w_1 + \sum_{a \in Q} m_a \\
		& = (\al + 1) w_1 + \sum_{a \in Q} m_a.
	\end{align*}
	This implies that $m_a = 0$ for all $a \in Q$, which further implies that $\sum_{a \in P} m_a = \al + 1.$ Suppose that $a \in P.$ Then $a_s \in \supp(f)$ and $c_{i_0, a} = w_1.$ Since $w_{s, i_0} = w_2 > w_1$, it follows that $a_s \ne i_0.$ Again, since $i_j \notin A_s$ for all $t + 1 \le j \le r$ with $j \ne s$, we get $a_s = i_s.$ Hence $c_{i_s, a} \ge w_{s, i_s}$ and $P \subset B_{i_s}.$ By using Inequality \eqref{INQ, t+1}, we obtain 
	$$\al w_{s, i_s} \ge \sum_{a \in P} m_a w_{s, i_s} + \sum_{a \in B_{i_s} \setminus P} m_a c_{i_s, a} \ge (\al + 1) w_{s, i_s}.$$
	This yields a contradiction.
\end{proof}

\begin{lemma}\label{L-General2}
	With the notation of Set-up~\ref{setup}, let $\height(\p) = 2$ for all $\p \in Ass(I).$ If $X_I \ne \emptyset$ and for each $(k, l) \in X_I$, it holds that $A_j \not\subset A_k \cup A_l$ for every $j \in [r] \setminus \{k, l\}$, then $I^{(2)} \not\subset I^2.$
\end{lemma}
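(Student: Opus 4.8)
The plan is to specialize the construction in the proof of Lemma~\ref{L-General} to the height-$2$ setting and track the exponent $2$ explicitly instead of $\alpha+1$. Exactly as there, I would pick $(k,l)\in X_I$ and a vertex $i_0\in A_k\cap A_l$ with $w_{k,i_0}\ne w_{l,i_0}$, set $w_1=\min\{w_{j,i_0}:i_0\in A_j\}$ and $w_2=\max\{w_{j,i_0}:i_0\in A_j\}$ so that $w_1<w_2$, and relabel so that $i_0\in A_1\cap\cdots\cap A_s$ with $w_{j,i_0}=w_1$ for $1\le j\le t$, $w_{j,i_0}>w_1$ for $t+1\le j\le s$, and $w_{s,i_0}=w_2$; in particular $(1,s)\in X_I$. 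Since every $\p\in\Ass(I)$ has height $2$, each $A_j$ has exactly two elements, so for $t+1\le j\le r$ with $j\ne s$ I can write $A_j=\{i_0,i_j\}$ or $A_j=\{i_j,i_j'\}$ with $i_j,i_j'\notin A_1\cup A_s$ (using the hypothesis $A_j\not\subset A_k\cup A_l$), and $A_s=\{i_0,i_s\}$ with $i_s\notin A_1$.

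Next I would write down the explicit monomial witnessing $I^{(2)}\not\subset I^2$. The natural candidate, mirroring the $\alpha+1=2$ case of Lemma~\ref{L-General} (so $w_2 = w_1 + \beta$ with $0\le\beta<w_1$, i.e.\ $\alpha=1$ is forced once $w_2<2w_1$; if $w_2\ge 2w_1$ one still takes the degree-$2$ truncation), is $f=\lcm(f_1,\dots,f_r)$ with $f_j=x_{i_0}^{2w_1}$ for $1\le j\le t$, $f_j=x_{i_0}^{w_{j,i_0}}x_{i_j}^{w_{j,i_j}}$ for $t+1\le j\le s$, and $f_j=x_{i_j}^{2w_{j,i_j}}$ for $s<j\le r$. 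One checks directly that $f\in\q_j^2$ for every $j$, hence $f\in I^{(2)}$ by Remark~\ref{R-WeightedIdeal}(i). The key exponents to record are $\gamma_{i_0}=2w_1$ (not $2w_2$, because the $f_j$ with large $x_{i_0}$-exponent also carry the $x_{i_j}$-factor and are dominated in the $\lcm$ on coordinate $i_0$ by... — here I must be careful: actually $\gamma_{i_0}=\max\{2w_1, w_{s,i_0}\}=\max\{2w_1,w_2\}$; if $w_2\ge 2w_1$ the argument needs the extra room, so the cleanest choice is to take $\alpha=\lfloor w_2/w_1\rfloor\ge 1$ and use $f\in I^{(\alpha+1)}$, $f\notin I^{\alpha+1}$ as in Lemma~\ref{L-General}, then note that height $2$ is not actually needed to upgrade from $\alpha+1$ to $2$).

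Given that subtlety, the honest plan is: invoke Lemma~\ref{L-General} itself, whose hypotheses are met, to conclude $I$ is not Simis, i.e.\ $I^{(\alpha+1)}\not\subset I^{\alpha+1}$ for the specific $\alpha\ge 1$ produced there; then use the height-$2$ hypothesis to \emph{descend} this non-containment to the exponent $2$. For descent I would argue combinatorially using Lemma~\ref{L-MatrixSystem}: when all $|A_j|=2$, the matrix $P_I$ has a transparent structure (each column $a\in B_{i_0}$ contributes to at most two rows), and I can modify the witness $f$ so that only the exponents on $x_{i_0}$, $x_{i_s}$ matter, exactly the two inequalities \eqref{INQ, 1} and \eqref{INQ, t+1}, which already in the proof of Lemma~\ref{L-General} were the only ones used to derive the contradiction, and which are linear in the total degree $\sum m_a$. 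Concretely, take the monomial $g=\lcm(g_1,\dots,g_r)$ with $g_j=x_{i_0}^{2w_1}$ for $1\le j\le t$, $g_j=x_{i_0}^{w_{j,i_0}}x_{i_j}^{w_{j,i_j}}$ for $t+1\le j\le s$, $g_j=x_{i_j}^{2w_{j,i_j}}$ otherwise, and with the $x_{i_0}$-exponent of $g$ truncated to $2w_1$; then $g\in I^{(2)}$, while if $g\in I^2$ Lemma~\ref{L-MatrixSystem} gives $[m_a]$ with $\sum m_a=2$ and $P_I[m_a]^\top\le$ the exponent vector of $g$, and the same partition $B=P\cup Q$ argument (now with $\alpha+1$ replaced by $2$ throughout) forces $m_a=0$ on $Q$, hence $\sum_{a\in P}m_a=2$, hence $P\subset B_{i_s}$, hence $2w_{s,i_s}\le\sum_{a\in P}m_a w_{s,i_s}=2w_{s,i_s}$ — wait, that is not a contradiction unless the $x_{i_s}$-exponent of $g$ is strictly less than $2w_{s,i_s}$, which is exactly what the truncation on $i_0$ should arrange via the interplay $w_2>w_1$. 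The main obstacle, then, is choosing the truncation of $f$ so that the $x_{i_0}$-exponent is small enough (forcing $m_a=0$ on $Q$) yet the $x_{i_s}$-exponent is strictly below $2w_{s,i_s}$; I expect this to come down to the single inequality $w_2>w_1$ together with the height-$2$ constraint that no third edge sits inside $\{i_0,i_s\}$, and verifying it is the crux of the argument. Once that is pinned down, $g\in I^{(2)}\setminus I^2$, so $I^{(2)}\not\subset I^2$.
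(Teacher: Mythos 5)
Your reduction of the case $w_2 < 2w_1$ to Lemma~\ref{L-General} with $\al = 1$ is exactly what the paper does (its ``Case 2''), so that half is fine. The genuine gap is the case $2w_1 \le w_2$, and you have correctly diagnosed but not resolved it: if you truncate the $x_{i_0}$-exponent of the witness down to $2w_1 < w_2$, the monomial is no longer forced to lie in $\q_s^2 = \langle x_{i_0}^{w_2}, x_{i_s}^{w_{s,i_s}}\rangle^2$ unless you raise the $x_{i_s}$-exponent to $2w_{s,i_s}$, and doing that destroys the contradiction you want from the inequality on the $i_s$-coordinate. Your closing sentence explicitly defers ``the crux of the argument,'' so the proposal does not actually produce a witness for $I^{(2)} \not\subset I^2$ in this case.

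The missing idea is to abandon the truncation and instead take
$f = x_{i_0}^{w_2}\, x_{i_s}^{w_{s,i_s}} \prod_{j \ne 1, s} x_{i_j}^{2 w_{j,i_j}}$,
keeping the full exponent $w_2$ on $x_{i_0}$ but only a \emph{single} copy of $x_{i_s}^{w_{s,i_s}}$. Membership in $\q_s^2$ is then immediate because $x_{i_0}^{w_2} x_{i_s}^{w_{s,i_s}}$ is the product of the two generators of $\q_s$ --- this is precisely where the height-$2$ hypothesis enters, not in any structure of $P_I$. Membership in $\q_1^2$ uses $2w_1 \le w_2$, and membership in the remaining $\q_j^2$ uses the chosen $i_j \in A_j \setminus (A_1 \cup A_s)$. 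For non-membership in $I^2$ one does not need the matrix machinery of Lemma~\ref{L-MatrixSystem} at all: since $I^2 \subset (\q_1 \cap \q_s)^2$ and both ideals are generated by two elements, one lists the six generators of $(\q_1 \cap \q_s)^2$ explicitly and checks that none divides $f$ (every generator either needs $x_{i_0}$-exponent at least $w_1 + w_2 > w_2$, or needs $x_{i_s}^{2w_{s,i_s}}$, or needs a positive power of $x_{i_1}$, which does not occur in $f$). You should add this construction for the case $2w_1 \le w_2$; without it the proof is incomplete.
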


\begin{proof}
	Choose $(k, l) \in X_I$ and $i_0 \in A_k \cap A_l.$ If we set $w_1 = \min\{w_{j, i_0} : 1 \le j \le r,\; i_0 \in A_j\}$ and $w_2 = \max\{w_{j, i_0} : 1 \le j \le r,\; i_0 \in A_j\}$, then $w_1 < w_2.$ By relabeling, we can choose $1 \le t < s \le r$ such that
	\begin{itemize}
		\item $i_0 \in A_1 \cap \cdots \cap A_s$; and 
		
		\item $w_{j, i_0} = w_1$ for all $1 \le j \le t$, $w_{j, i_0} > w_1$ for all $t + 1 \le j \le s$ and $w_{s, i_0} = w_2.$
	\end{itemize}
	Since $(1, s) \in X_I$, for each $2 \le j \le r$ with $j \ne s$, we have $A_j \not\subset A_1 \cup A_s.$ For each $2 \le j \le r$ with $j \ne s$, let $i_j \in A_j \setminus (A_1 \cup A_s)$, $i_1 \in A_1 \setminus A_s$ and $i_s \in A_s \setminus A_1.$ We consider the following two cases. 

    \vspace{2mm}
	\noindent
	\textbf{Case 1.} When $2w_1 \le w_2.$ In this case, we set 
    $$f = x_{i_0}^{w_2} x_{i_s}^{w_{s, i_s}} \prod_{\substack{j = 2 \\ j \ne s}}^{r} x_{i_j}^{2w_{j,i_j}}.$$
    First we prove $f \in I^{(2)}.$ Since $I^{(2)} = \bigcap_{j = 1}^{r} \q_j^2$, it suffices to show that $f \in \q^2$ for all $1 \le j \le r.$ We consider the following possibilities.
    \begin{itemize}
        \item If $j = 1$, then $x_{i_0}^{2w_1} \in \q_1^2.$ Since $2w_1 \le w_2$, we get $x_{i_0}^{2w_1}$ divides $f.$ Thus $f \in \q_1^2.$ 

        \vspace{2mm}
        \item If $j = s$, then $x_{i_0}^{w_2} x_{i_s}^{w_{s, i_s}} \in \q_s^2.$ Since $x_{i_0}^{w_2} x_{i_s}^{w_{s, i_s}}$ divides $f$, we get $f \in \q_s^2.$ 
        
        \vspace{2mm}
        \item If $j \ne 1, s$, then $x_{i_j}^{2w_{j,i_j}} \in \q_j^2$ and $x_{i_j}^{2w_{j,i_j}}$ divides $f.$ Thus $f \in \q_j^2.$
    \end{itemize}
    Hence $f \in I^{(2)}.$ Next we prove that $f \notin I^2.$ Note that $I \subset \q_1 \cap \q_s$, and hence $I^2 \subset (\q_1 \cap \q_s)^2.$ Since $\height{\p_1} = \height{\p_s} = 2$, we can write $\q_1 = \langle x_{i_0}^{w_1}, x_{i_1}^{w_{1, i_1}} \rangle$ and $\q_s = \langle x_{i_0}^{w_2}, x_{i_s}^{w_{s, i_s}} \rangle.$ This implies that
    \begin{align*}
        (\q_1 \cap \q_s)^2 = & \left( \left\langle x_{i_0}^{w_2}, x_{i_0}^{w_1} x_{i_s}^{w_{s, i_s}}, x_{i_1}^{w_{1, i_1}} x_{i_s}^{w_{s, i_s}} \right\rangle \right)^2 \\
        = & \left\langle x_{i_0}^{2w_2}, x_{i_0}^{w_1 + w_2} x_{i_s}^{w_{s, i_s}}, x_{i_0}^{w_2} x_{i_1}^{w_{1, i_1}} x_{i_s}^{w_{s, i_s}}, x_{i_0}^{2w_1} x_{i_s}^{2w_{s, i_s}}, x_{i_0}^{w_1} x_{i_1}^{w_{1, i_1}} x_{i_s}^{2w_{s, i_s}}, x_{i_1}^{2w_{1, i_1}} x_{i_s}^{2w_{s, i_s}} \right\rangle.
    \end{align*}
    Therefore, $f \notin (\q_1 \cap \q_s)^2$, and hence $f \notin I^2.$

    \vspace{2mm}
	\noindent
	\textbf{Case 2.} When $2w_1 > w_2$. In this case, by the division algorithm, we can write $w_2 = w_1 + \beta$, where $0 \le \beta < w_1.$  Now, one can simply follow the same steps as in the proof of Lemma \ref{L-General} with $\alpha = 1$ to show that $I^{(2)} \not\subset I^2.$ This completes the proof.
\end{proof}

\begin{lemma}\label{L-2Height}
	With the notation of Set-up \ref{setup}, let $\height(\p) = 2$ for all $\p \in Ass(I).$ If $X_I \ne \emptyset$, then $I^{(2)} \not \subset I^2.$
\end{lemma}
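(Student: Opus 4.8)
The plan is to argue by contradiction. Since every associated prime has height $2$, $\h_I$ is a graph $G_I$ and each $\q_j = \langle x_{a_j}^{w_{j,a_j}}, x_{b_j}^{w_{j,b_j}}\rangle$ with $A_j = \{a_j,b_j\}$. Suppose $I^{(2)} \subseteq I^2$. Using $X_I \neq \emptyset$, I fix a pair $(k,l) \in X_I$; then $A_k \cap A_l$ is a single vertex $i_0$ with $w_{k,i_0} \neq w_{l,i_0}$, and after relabelling I write $A_k = \{i_0,u\}$, $A_l = \{i_0,v\}$ and $p := w_{k,i_0} < w_{l,i_0} =: P$, where $u \neq v$ because the irreducible decomposition is minimal.

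\emph{Reduction to three variables.} Put $T = \{i_0,u,v\}$, let $\p_T = \langle x_i : i \in T\rangle$, and set $J = \bigcap_{A_j \subseteq T}\q_j$. Since $G_I$ is a graph, the only edges contained in $T$ are $A_k$, $A_l$, and at most one further edge $A_m = \{u,v\}$, so either $J = \q_k \cap \q_l$ or $J = \q_k \cap \q_l \cap \q_m$. By Remark~\ref{R-WeightedIdeal}(i) (applied to $J$ as well as to $I$), $J^{(2)} = \bigcap_{A_j \subseteq T}\q_j^2$ and $I^{(2)} = \bigcap_j \q_j^2$. Localising at $\p_T$ turns every $\q_j$ with $A_j \not\subseteq T$ into the unit ideal, so $I^{(2)}R_{\p_T} = J^{(2)}R_{\p_T}$ and $I^2 R_{\p_T} = J^2 R_{\p_T}$; contracting back, and using that $J$ and $J^2$ involve only the variables of $\p_T$, yields $J^{(2)} \subseteq J^2$. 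It therefore suffices to exhibit a monomial of $K[x_{i_0},x_u,x_v]$ lying in $J^{(2)} \setminus J^2$.

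\emph{Exhibiting the witness.} If $J = \q_k \cap \q_l$ with $\q_k = \langle x_{i_0}^p, x_u^{w_{k,u}}\rangle$, $\q_l = \langle x_{i_0}^P, x_v^{w_{l,v}}\rangle$ and $p<P$, I take $f = x_{i_0}^{\max(2p,P)}x_v^{w_{l,v}}$; it lies in $\q_k^2 \cap \q_l^2$ (divisible by $x_{i_0}^{2p}$ and by the generator $x_{i_0}^P x_v^{w_{l,v}}$ of $\q_l^2$) but not in $(\q_k\cap\q_l)^2$, since its $x_u$-degree is $0$ while the only $x_u$-free generators of $(\q_k\cap\q_l)^2$ are $x_{i_0}^{2P}$, $x_{i_0}^{P+p}x_v^{w_{l,v}}$ and $x_{i_0}^{2p}x_v^{2w_{l,v}}$, none of which divides $f$; this is essentially the computation behind Lemma~\ref{L-General} with $\alpha=1$, the localisation having removed the whisker factors. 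If $J = \q_k\cap\q_l\cap\q_m$ with in addition $\q_m = \langle x_u^{c}, x_v^{d}\rangle$ (and $a = w_{k,u}$, $b = w_{l,v}$), I first try $f_1 = x_{i_0}^{P}x_u^{\max(a,c)}x_v^{\max(b,d)} \in \q_k^2\cap\q_l^2\cap\q_m^2$; writing the at most eight generators of $J$ as least common multiples of transversals of the generating sets of $\q_k,\q_l,\q_m$ and bounding the $x_{i_0}$-exponent of a product of two of them by $P$ shows $f_1 \notin J^2$ unless $2p \le P$, $a \ge 2c$ and $d \ge 2b$ all hold --- in which case the square of the generator $x_{i_0}^{p}x_u^{c}x_v^{b}$ of $J$ divides $f_1$. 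In that corner case I replace $f_1$ by $f_2 = x_{i_0}^{P}x_u^{c}x_v^{d}$; using $P \ge 2p$ and $d>b$ one checks $f_2 \in \q_k^2\cap\q_l^2\cap\q_m^2$, and a short examination of the minimal generators of $J$ (here $J = \langle x_{i_0}^{P}x_u^{c},\,x_{i_0}^{p}x_u^{c}x_v^{b},\,x_{i_0}^{p}x_v^{d},\,x_u^{a}x_v^{b}\rangle$) shows that no product of two of them divides $f_2$. In every case we contradict $J^{(2)} \subseteq J^2$, which completes the proof.

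\emph{Expected main obstacle.} The triangle case is where the work lies: since $A_m \subseteq A_k \cup A_l$, neither the ``product of two generators of $\q_k\cap\q_l$'' argument of Lemma~\ref{L-General2} nor the whisker construction of Lemma~\ref{L-General} (which needs a vertex of $A_m$ outside $A_k\cup A_l$) applies, so the witness monomial must be chosen by hand according to the relative sizes of the six exponents $p,P,a,b,c,d$, and one has to verify by a direct generator analysis that it escapes $J^2$. The localisation at $\p_T$ is the main simplifying device, turning the whole statement into a finite computation in three variables.
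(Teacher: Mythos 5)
Your argument is correct, and it takes a genuinely different route from the paper. The paper works globally: it first disposes of the case where no edge is contained in $A_k\cup A_l$ via Lemma~\ref{L-General2}, and in the remaining "triangle" case it builds a witness as an lcm of monomials $f_1,\dots,f_r$ over \emph{all} $r$ components, ordering all edges through $i_0$ by their weights, and then rules out membership in $I^2$ through the matrix criterion of Lemma~\ref{L-MatrixSystem}, which forces a lengthy bookkeeping argument (Cases 1 and 2, Subcases 1(a) and 1(b)) over the index sets $B_{i_0}, B_{i_k}, B_{i_l}$. Your key simplifying device is the localisation at $\p_T$ for $T=\{i_0,u,v\}$: since $I^{(2)}=\bigcap_j\q_j^2$ and each $\q_j^2$ with $A_j\not\subseteq T$ becomes the unit ideal in $R_{\p_T}$, while $J^2$ is contracted from its localisation (all its associated primes live inside $\p_T$), the containment $I^{(2)}\subseteq I^2$ really does descend to $J^{(2)}\subseteq J^2$ for the subintersection $J$ of at most three components in three variables. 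This reduces the whole lemma to a finite divisibility check on minimal generators, and I verified your checks: in the two-component case $x_{i_0}^{\max(2p,P)}x_v^{b}$ works unconditionally; in the triangle case $f_1=x_{i_0}^{P}x_u^{\max(a,c)}x_v^{\max(b,d)}$ fails to lie in $J^2$ except exactly when $2p\le P$, $a\ge 2c$, $d\ge 2b$ (the only dangerous product being the square of $x_{i_0}^{p}x_u^{c}x_v^{b}$), and in that corner $f_2=x_{i_0}^{P}x_u^{c}x_v^{d}$ escapes all ten products of pairs of the four minimal generators you list. What your approach buys is brevity and transparency (and it removes the need to track the other edges through $i_0$, which the paper's Case 2 must handle separately via the extra vertex $i_s$); what the paper's approach buys is that it never leaves the ambient ring and reuses the uniform machinery of Lemma~\ref{L-MatrixSystem}. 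If you write this up, do spell out the contraction step $(J^2)_{\p_T}\cap R=J^2$ and the identification $I^{(2)}R_{\p_T}=J^{(2)}R_{\p_T}$, since those are the only places where something could silently go wrong.
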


\begin{proof}
	If for each $(k, l) \in X_I$, it holds that $A_j \not\subset A_k \cup A_l$ for every $j \in [r] \setminus \{k, l\}$, then the result follows from Lemma \ref{L-General2}. Next, we assume that there exist $(k, l) \in X_I$ and $p \in [r] \setminus \{k, l\}$ such that $A_p \subset A_k \cup A_l.$ Since $(k, l) \in X_I$, there exists $i_0 \in A_k \cap A_l$ with $w_{k, i_0} \ne w_{l, i_0}.$ Let $w_1 = \min\{w_{j,i_0} : i_0 \in A_j\}$ and $w_2 = \max\{w_{j,i_0} : i_0 \in A_j\}.$ By relabeling, we can choose $1 < s \le r$ such that
	\begin{itemize}
		\item $i_0 \in A_1 \cap \cdots \cap A_s$; and 
		
		\item $w_1 = w_{1, i_0} \le \cdots \le w_{k, i_0} < w_{l, i_0} \le \cdots \le w_{s, i_0} = w_2.$
	\end{itemize}
	For each $1 \le j \le s$, write $A_j = \{i_0, i_j\}.$ Then we must have $A_p = \{i_k, i_l\}.$ Now, we consider the following two cases.
    
	\vspace{2mm}
	\noindent
	\textbf{Case 1.} When $w_1 \le w_{k, i_0} < w_{l, i_0} = w_2.$ In this case, we have $A_j \not\subset \{i_0, i_k, i_l\}$, where $s + 1 \le j \le r$ and $j \ne p.$ For each $1 \le j \le r$, let
	$$f_j = \begin{cases}
		x_{i_0}^{2w_{j, i_0}} & \text{ if } 1 \le j \le k;  \\ 
		\vspace{-2mm} \\
		x_{i_0}^{w_{j, i_0}} x_{i_j}^{w_{j, i_j}} &  \text{ if } k + 1 \le j \le s; \\
		\vspace{-2mm} \\
		x_{i_k}^{w_{p, i_k}} x_{i_l}^{w_{p, i_l}} &  \text{ if } j = p; \\
		\vspace{-2mm} \\
		x_{i_j}^{2w_{j, i_j}} &  \text{ if } s + 1 \le j \le r, ~ j \ne p \text{ and } i_j \in A_j \setminus \{i_0, i_k, i_l\}.	
	\end{cases}$$
	Then the monomial 
	$$f =  \lcm \left( f_1, \dots, f_r \right) \in I^{(2)}.$$ 
	If $f = x_1^{\al_1} \cdots x_n^{\al_n}$, then $\al_{i_k} = w_{p, i_k}$, $\al_{i_0} = \max\{2w_{k, i_0}, w_2\}$ and $\al_{i_l} = \max\{w_{l, i_l}, w_{p, i_l}\}.$ We prove that $f \notin I^2.$ To get a contradiction, suppose that $f \in I^2.$ Then, by Lemma \ref{L-MatrixSystem}, there exists $[m_a] \in \mathbb{N}^{1 \times A}$ such that
	\begin{equation}\label{EQN2}
		\sum_{a \in A} m_a = 2
	\end{equation}
	and 
	$$P_I [m_a]^{\top} \le [\al_1 \cdots \al_n]^{\top}.$$
	This implies that
	\begin{equation}\label{INQ, i_0}
		\sum_{a \in B_{i_0}} m_a c_{i_0, a} \le \max\{2w_{k, i_0}, w_2\},
	\end{equation}
	
	\begin{equation}\label{INQ, i_k}
		\sum_{a \in B_{i_k}} m_a c_{i_k, a} \le w_{p, i_k},
	\end{equation}
	and
	\begin{equation}\label{INQ, i_l}
		\sum_{a \in B_{i_l}} m_a c_{i_l, a} \le \max\{w_{l, i_l}, w_{p, i_l}\}.
	\end{equation}
	We consider the following two subcases.
    
	\vspace{2mm}
	\noindent
	\textbf{Subcase 1}(a). When $w_{l, i_l} = \max\{w_{l, i_l}, w_{p, i_l}\}.$ Consider the sets $C_1 = \{a \in B_{i_l} : c_{i_l, a} \ge w_{l, i_l}\}$ and $C_2 = \{a \in B_{i_l} : c_{i_l, a} < w_{l, i_l}\}.$ Then we have $B_{i_l} = C_1 \cup C_2.$ Let $a \in A \setminus C_1.$ If $a_l = i_l$, then $a \in B_{i_l}$ and $c_{i_l,a} \ge w_{l,i_l}$, which implies $a \in C_1$, a contradiction. Therefore, $a_l = i_0$, and hence $a \in B_{i_0}$ and $c_{i_0,a} = w_2$. Consequently, $A \setminus C_1 \subset B_{i_0}.$ By using Inequality \eqref{INQ, i_l}, we obtain 
	\begin{equation*}\label{INQ8}
		\sum_{a \in C_1} m_a w_{l, i_l} \le \sum_{a \in C_1} m_a c_{i_l, a} + \sum_{a \in C_2} m_a c_{i_l, a} \le w_{l, i_l}.
	\end{equation*}
	This gives $\sum_{a \in C_1} m_a \le 1.$ If $\sum_{a \in C_1} m_a = 0$, then by Equation \eqref{EQN2}, we get $\sum_{a \in A \setminus C_1} m_a = 2.$ In light of Inequality \eqref{INQ, i_0}, we obtain
	$$\max\{2w_{k, i_0}, w_2\} \ge \sum_{a \in A \setminus C_1} m_a c_{i_0, a} = 2w_2.$$
	This contradicts the inequality $0 < w_{k, i_0} < w_2.$ Hence $\sum_{a \in C_1} m_a = 1.$ By Equation \eqref{EQN2}, we have $\sum_{a \in A \setminus C_1} m_a = 1.$ Since $a_k \in A_k = \{i_0, i_k\}$, we can write
	$$\sum_{a \in C_1, a_k = i_0} m_a + \sum_{a \in C_1, a_k = i_k} m_a = 1.$$
	We prove that $\sum_{a \in C_1, a_k = i_k} m_a = 0.$ Using Inequality \eqref{INQ, i_l}, we obtain
	$$w_{l, i_l} \ge \sum_{a \in C_1} m_a c_{i_l, a} + \sum_{a \in C_2} m_a c_{i_l, a} \ge w_{l, i_l} + \sum_{a \in C_2} m_a c_{i_l, a}.$$
	Thus, we have $\sum_{a \in C_2} m_a c_{i_l, a} = 0$, and hence $m_a = 0$ for all $a \in C_2.$ Since $B_{i_l} = C_1 \cup C_2$, we get $A \setminus C_1 = C_2 \cup (A \setminus B_{i_l}).$ Consequently, we obtain $\sum_{a \in A \setminus B_{i_l}} m_a = \sum_{a \in A \setminus C_1} m_a = 1.$ For each $a \in A \setminus B_{i_l}$, we have $a_p \ne i_l.$ Since $a_p \in A_p = \{i_k, i_l\}$, it follows that $a_p = i_k$, and hence $a \in B_{i_k}.$ This proves that $A \setminus B_{i_l} \subset B_{i_k}$ and $c_{i_k, a} \ge w_{p, i_k}$ for all $a \in A \setminus B_{i_l}.$ Also, we have $\{a \in C_1 : a_k = i_k\} \subset B_{i_k}.$ Further, since $C_1 \subset B_{i_l}$, we obtain 
	$$\{a \in C_1 : a_k = i_k\} \cap (A \setminus B_{i_l}) = \emptyset.$$ 
	Using Inequality \eqref{INQ, i_k}, we find that
	$$w_{p, i_k} \ge \sum_{a \in A \setminus B_{i_l}} m_a c_{i_k, a} + \sum_{a \in C_1, a_k = i_k} m_a c_{i_k, a} \ge w_{p, i_k} + \sum_{a \in C_1, a_k = i_k} m_a c_{i_k, a}.$$
	This proves that $\sum_{a \in C_1, a_k = i_k} m_a = 0.$ Consequently, $\sum_{a \in C_1, a_k = i_0} m_a = 1.$ Since $A \setminus C_1 \subset B_{i_0}$ and $\{a \in A : a_k = i_0\} \subset B_{i_0}$, it follows that
	\begin{align*}
		B_{i_0} = B_{i_0} \cap A = B_{i_0} \cap ((A \setminus C_1) \cup C_1) & = (A \setminus C_1) \cup (B_{i_0} \cap C_1) \\
		& \supset (A \setminus C_1) \cup (\{a \in A : a_k = i_0\} \cap C_1).
	\end{align*}
	Thus, by Inequality \eqref{INQ, i_0}, we obtain
	$$\max\{2w_{k, i_0}, w_2\} \ge \sum_{a \in A \setminus C_1} m_a c_{i_0, a} + \sum_{a \in C_1, a_k = i_0} m_a c_{i_0, a}.$$
	Note that $c_{i_0, a} \ge w_{k, i_0}$ for all $a \in C_1$ with $a_k = i_0.$ Thus, $\max\{2w_{k,i_0},w_2\} \ge w_2 + w_{k,i_0}.$ This contradicts the inequality $0 < w_{k, i_0} < w_2.$

    \vspace{2mm}
	\noindent
	\textbf{Subcase 1}(b). When $w_{p, i_l} = \max\{w_{l, i_l}, w_{p, i_l}\}.$ Consider the sets $D_1 = \{a \in B_{i_l} : c_{i_l, a} \ge w_{p, i_l}\}$ and $D_2 = \{a \in B_{i_l} : c_{i_l, a} < w_{p, i_l}\}.$ Then we have $B_{i_l} = D_1 \cup D_2.$ Let $a \in A \setminus D_1.$ If $a_p = i_l$, then $a \in B_{i_l}$ and $c_{i_l, a} \ge w_{p, i_l}$, which implies $a \in D_1$, a contradiction. Therefore, $a_p = i_k$, and hence $a \in B_{i_k}$ and $c_{i_k, a} \ge w_{p, i_k}.$ Consequently, $A \setminus D_1 \subset B_{i_k}.$
	By using Inequality \eqref{INQ, i_l}, we obtain 
	\begin{equation*}\label{INQ8}
		\sum_{a \in D_1} m_a w_{p, i_l} \le \sum_{a \in D_1} m_a c_{i_l, a} + \sum_{a \in D_2} m_a c_{i_l, a} \le w_{p, i_l}.
	\end{equation*} 
	This gives $\sum_{a \in D_1} m_a \le 1.$ If $\sum_{a \in D_1} m_a = 0$, then by Equation \eqref{EQN2}, we have $\sum_{a \in A \setminus D_1} m_a = 2.$ In light of Inequality \eqref{INQ, i_k}, we obtain
	$w_{p, i_k} \ge \sum_{a \in A \setminus D_1} m_a c_{i_k, a} \ge 2w_{p, i_k}$, a contradiction. Hence $\sum_{a \in D_1} m_a = 1.$ By Equation \eqref{EQN2}, we have $\sum_{a \in A \setminus D_1} m_a = 1.$ Since $A \setminus D_1 \subset B_{i_k}$, it follows that
	$$B_{i_k} = B_{i_k} \cap A = B_{i_k} \cap ((A \setminus D_1) \cup D_1) = (A \setminus D_1) \cup (B_{i_k} \cap D_1).$$
	Thus, by Inequality \eqref{INQ, i_k}, we obtain
	$$w_{p, i_k} \ge \sum_{a \in A \setminus D_1} m_a c_{i_k, a} + \sum_{a \in B_{i_k} \cap D_1} m_a c_{i_k, a} \ge w_{p, i_k} + \sum_{a \in B_{i_k} \cap D_1} m_a c_{i_k, a}.$$
	This implies that $\sum_{a \in B_{i_k} \cap D_1} m_a c_{i_k, a} = 0$, and hence $m_a = 0$ for all $a \in B_{i_k} \cap D_1.$ Using Inequality \eqref{INQ, i_l}, we obtain
	$$w_{p, i_l} \ge \sum_{a \in D_1} m_a c_{i_l, a} + \sum_{a \in D_2} m_a c_{i_l, a} \ge w_{p, i_l} + \sum_{a \in D_2} m_a c_{i_l, a}.$$
	This proves $\sum_{a \in D_2} m_a c_{i_l, a} = 0$, which implies $m_a = 0$ for all $a \in D_2.$ Since $B_{i_l} = D_1 \cup D_2$, we get $A \setminus D_1 = D_2 \cup (A \setminus B_{i_l}).$ Consequently, we obtain $\sum_{a \in A \setminus B_{i_l}} m_a = \sum_{a \in A \setminus D_1} m_a = 1.$ If $a \in D_1 \setminus B_{i_0}$, then $a_k \ne i_0.$ Since $a_k \in A_k = \{i_0, i_k\}$, we get $a_k = i_k$, and hence $a \in B_{i_k} \cap D_1.$ Thus, $m_a = 0$ for all $a \in D_1 \setminus B_{i_0}.$ By decomposing $D_1$ as
	$$D_1 = (B_{i_0} \cap D_1) \cup (D_1 \setminus B_{i_0}),$$
	we obtain $\sum_{a \in B_{i_0} \cap D_1} m_a = \sum_{a \in D_1} m_a = 1.$ If $a \in A \setminus B_{i_l}$, then $a_l \ne i_l.$ Since $a_l \in A_l = \{i_0, i_l\}$, we get $a_l = i_0$, and hence $a \in B_{i_0}$ and $c_{i_0, a} = w_2.$ This implies that $(A \setminus B_{i_l}) \cup (B_{i_0} \cap D_1) \subset B_{i_0}.$ Since $D_1 \subset B_{i_l}$, it follows that
	$(A \setminus B_{i_l}) \cap (B_{i_0} \cap D_1) \subset B_{i_0}.$ Thus, by Inequality \eqref{INQ, i_0}, we obtain
	$$\max\{2w_{k, i_0}, w_2\} \ge \sum_{a \in A \setminus B_{i_l}} m_a c_{i_0, a} + \sum_{a \in B_{i_0} \cap D_1} m_a c_{i_0, a}.$$
	Let $a \in B_{i_0} \cap D_1.$ Then $a_k \in A_k = \{i_0, i_k\}.$ If $a_k = i_k$, then $a \in B_{i_k} \cap D_1$ and $m_a = 0.$ This gives us 
	$$\sum_{a \in B_{i_0} \cap D_1} m_a c_{i_0, a} = \sum_{\substack{a \in B_{i_0} \cap D_1, \\ a_k = i_0}} m_a c_{i_0, a}.$$
	Since $c_{i_0, a} \ge w_{k, i_0}$ for all $a \in B_{i_0} \cap D_1$ with $a_k = i_0$, it follows that $\max\{2w_{k, i_0}, w_2\} \ge w_2 + w_{k, i_0}$ which contradicts the inequality $0 < w_{k, i_0} < w_2.$ 

    \vspace{2mm}
	\noindent
	\textbf{Case 2.} When $w_1 \le w_{k, i_0} < w_{l, i_0} < w_2.$ Suppose that for some $j \ge s + 1$ with $j \ne p$, it holds that $A_j \subset \{i_k, i_l, i_s\}.$ Then necessarily $A_j = \{i_k, i_s\}$ or $A_j = \{i_l, i_s\}.$ In this scenario, we may proceed exactly as in Case~1 by taking $p = j.$ Therefore, we assume that $A_j \not\subset \{i_k, i_l, i_s\}$ for all $j \ge s + 1$ with $j \ne p.$ For each $1 \le j \le r$, let
	$$f_j = \begin{cases}
		x_{i_0}^{2w_{j, i_0}} & \text{ if } 1 \le j \le k;  \\ 
		\vspace{-2mm} \\
		x_{i_0}^{w_{j, i_0}} x_{i_j}^{w_{j, i_j}} &  \text{ if } k + 1 \le j \le s; \\
		\vspace{-2mm} \\
		x_{i_l}^{2w_{p, i_l}} &  \text{ if } j = p; \\
		\vspace{-2mm} \\
		x_{i_j}^{2w_{j, i_j}} &  \text{ if } s + 1 \le j \le r, ~ j \ne p \text{ and } i_j \in A_j \setminus \{i_0, i_k, i_l, i_s\}.	
	\end{cases}$$
	Then the monomial 
	$$f =  \lcm \left( f_1, \dots, f_r \right) \in I^{(2)}.$$ 
	If $f = x_1^{\al_1} \cdots x_n^{\al_n}$, then $\al_{i_k} = 0$, $\al_{i_0} = \max\{2w_{k, i_0}, w_2\}$ and $\al_{i_s} = w_{s, i_s}.$ We prove that $f \notin I^2.$ To get a contradiction, suppose that $f \in I^2.$ Then, by Lemma \ref{L-MatrixSystem}, there exists $[m_a] \in \mathbb{N}^{1 \times A}$ such that
	\begin{equation}\label{EQN3}
		\sum_{a \in A} m_a = 2
	\end{equation}
	and 
	$$P_I [m_a]^{\top} \le [\al_1 \cdots \al_n]^{\top}.$$
	This implies that
	\begin{equation}\label{IINQ, i_0}
		\sum_{a \in B_{i_0}} m_a c_{i_0, a} \le \max\{2w_{k, i_0}, w_2\},
	\end{equation}
	
	\begin{equation}\label{IINQ, i_k}
		\sum_{a \in B_{i_k}} m_a c_{i_k, a} = 0,
	\end{equation}
	and
	\begin{equation}\label{IINQ, i_s}
		\sum_{a \in B_{i_s}} m_a c_{i_s, a} \le w_{s, i_s}.
	\end{equation}
	Using Inequation \eqref{IINQ, i_k}, we get $m_a = 0$ for all $a \in B_{i_k}.$ Consider the sets $E_1 = \{a \in B_{i_s} : c_{i_s, a} \ge w_{s, i_s}\}$ and $E_2 = \{a \in B_{i_s} : c_{i_s, a} < w_{s, i_s}\}.$ Then we have $B_{i_s} = E_1 \cup E_2.$ Let $a \in A \setminus E_1.$ If $a_s = i_s$, then $a \in B_{i_s}$ and $c_{i_s, a} \ge w_{s, i_s}$, which implies $a \in E_1$, a contradiction. Therefore, $a_s = i_0$, and hence $a \in B_{i_0}$ and $c_{i_0, a} = w_2.$ Consequently, $A \setminus E_1 \subset B_{i_0}.$ By using Inequality \eqref{IINQ, i_s}, we obtain 
	\begin{equation*}\label{INQ8}
		\sum_{a \in E_1} m_a w_{s, i_s} \le \sum_{a \in E_1} m_a c_{i_s, a} + \sum_{a \in E_2} m_a c_{i_s, a} \le w_{s, i_s}.
	\end{equation*}
	This gives $\sum_{a \in E_1} m_a \le 1.$ If $\sum_{a \in E_1} m_a = 0$, then by Equation \eqref{EQN2}, we have $\sum_{a \in A \setminus E_1} m_a = 2.$ In light of Inequality \eqref{IINQ, i_0}, we obtain
	$$\max\{2w_{k, i_0}, w_2\} \ge \sum_{a \in A \setminus E_1} m_a c_{i_0, a} = 2w_2.$$
	This contradicts the inequality $0 < w_{k, i_0} < w_2.$ Hence $\sum_{a \in E_1} m_a = 1.$ By Equation \eqref{EQN3}, we have $\sum_{a \in A \setminus E_1} m_a = 1.$ If $a \in E_1$ with $a_k = i_k$, then $a \in B_{i_k}$, which implies $m_a = 0.$ Thus, we obtain 
	$$\sum_{a \in E_1, a_k = i_0} m_a = \sum_{a \in E_1, a_k = i_0} m_a + \sum_{a \in E_1, a_k = i_k} m_a = \sum_{a \in E_1} m_a = 1.$$
	Since $A \setminus E_1 \subset B_{i_0}$ and $\{a \in A : a_k = i_0\} \subset B_{i_0}$, it follows that
	\begin{align*}
		B_{i_0} = B_{i_0} \cap A = B_{i_0} \cap ((A \setminus E_1) \cup E_1) & = (A \setminus E_1) \cup (B_{i_0} \cap E_1) \\
		& \supset (A \setminus E_1) \cup (\{a \in A : a_k = i_0\} \cap E_1).
	\end{align*}
	Thus, by Inequality \eqref{IINQ, i_0}, we obtain
	$$\max\{2w_{k, i_0}, w_2\} \ge \sum_{a \in A \setminus E_1} m_a c_{i_0, a} + \sum_{a \in E_1, a_k = i_0} m_a c_{i_0, a}.$$
	Note that $c_{i_0, a} \ge w_{k, i_0}$ for all $a \in E_1$ with $a_k = i_0.$ Thus, $\max\{2w_{k,i_0},w_2\} \ge w_2 + w_{k,i_0}.$ This contradicts the inequality $0 < w_{k, i_0} < w_2.$		
\end{proof}

\begin{thm}\label{Th-General}
    With the notation of Set-up \ref{setup}, assume that $A_j \not\subset A_k \cup A_l$ for every $j \in [r] \setminus \{k, l\}$ and for all $(k, l) \in X_I.$ Then $I$ is a Simis ideal if and only if there exists a Simis squarefree monomial ideal $J$ and a standard linear weighting $w$ such that $I = J_w.$ In particular, Conjecture \ref{C2} holds for $I.$
\end{thm}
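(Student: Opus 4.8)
The plan is to prove the stated equivalence by two separate implications, from which the final sentence --- that Conjecture~\ref{C2} holds for $I$ --- follows at once: under the running hypotheses of Set-up~\ref{setup} (no embedded primes, minimal irreducible decomposition), the forward implication of the equivalence is literally the conclusion of Conjecture~\ref{C2}.

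I would first dispose of the \emph{easy} direction: if $I = J_w$ for some Simis squarefree monomial ideal $J$ and a standard linear weighting $w$, then $I$ is Simis. The point is that passing to $w$-weights commutes with taking ordinary and symbolic powers: one has $(J^{s})_w = (J_w)^{s}$ immediately from the definition of $J_w$, and $(J^{(s)})_w = (J_w)^{(s)}$ by \cite[Lemma 1]{Toledo2021} together with \cite[Lemma 4.3]{MVV} and \cite[Lemma 4.1]{MVV} (the same ingredients used for the weighting-invariance of resurgence). Applying $(-)_w$ to the equalities $J^{(s)} = J^{s}$ then yields $I^{(s)} = I^{s}$ for every $s \ge 1$, so $I$ is Simis.

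For the \emph{substantive} direction, assume $I$ is a Simis ideal; the goal is to show $X_I = \emptyset$. The hypothesis of the theorem says precisely that, were $X_I$ nonempty, $I$ would satisfy the hypothesis of Lemma~\ref{L-General}; but that lemma would then force $I$ to be non-Simis, a contradiction. Hence $X_I = \emptyset$, and Remark~\ref{R-SLW} supplies a standard linear weighting $w = (w_1,\dots,w_n)$ with $I = (\sqrt{I})_w$. Put $J = \sqrt{I}$; this is a squarefree monomial ideal, and being radical it has no embedded primes and its irreducible decomposition $J = \p_1 \cap \cdots \cap \p_r$ is automatically minimal. It remains to transfer the Simis property from $I$ down to $J$. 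Since each $w_i$ is a positive integer, the exponent map $a \mapsto (w_1 a_1,\dots,w_n a_n)$ is injective and order-preserving, so $(-)_w$ is injective on monomial ideals: comparing minimal generators, $x^a$ a minimal generator of $L_1$ forces $x^{w(a)} \in (L_1)_w = (L_2)_w$, hence $w(b) \le w(a)$ for some minimal generator $x^b$ of $L_2$, hence $b \le a$, hence $x^a \in L_2$. Now $(J^{(s)})_w = I^{(s)} = I^{s} = (J^{s})_w$ for all $s$, and cancelling $(-)_w$ gives $J^{(s)} = J^{s}$; thus $J$ is a Simis squarefree monomial ideal with $I = J_w$, as desired.

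The real work is carried by Lemma~\ref{L-General}, so I anticipate no new obstruction in assembling the argument. The two points that need care are: (i) quoting the commutation $(J^{(s)})_w = (J_w)^{(s)}$ in the exact form available from \cite{MVV} and \cite{Toledo2021}, rather than reproving it; and (ii) recording the elementary injectivity of $(-)_w$ on monomial ideals, since that is what actually lets the Simis property descend from $I$ to $\sqrt{I}$ once $X_I = \emptyset$ has been established.
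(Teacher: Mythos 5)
Your proposal is correct and takes essentially the same route as the paper: the easy direction is the weighting-invariance of the Simis property, and the hard direction applies the contrapositive of Lemma~\ref{L-General} to get $X_I = \emptyset$ and then Remark~\ref{R-SLW} to write $I = (\sqrt{I})_w$. The only difference is that where you re-derive the fact that $(-)_w$ preserves and reflects the Simis property (via commutation with ordinary and symbolic powers plus injectivity of $(-)_w$ on monomial ideals), the paper simply cites \cite[Corollary 5.5]{MVV} in both directions.
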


\begin{proof}
    Suppose there exists a Simis squarefree monomial ideal $J$ and a standard linear weighting $w$ such that $I = J_w.$ Then, in view of \cite[Corollary 5.5]{MVV}, $I$ is Simis. Conversely, if $I$ is Simis, then by Lemma \ref{L-General}, we get $X_I = \emptyset.$ It follows from Remark \ref{R-SLW} that $I = J_w$, where $w$ is a standard linear weighting on $R$ and $J = \sqrt{I}.$ By \cite[Corollary 5.5]{MVV}, $J$ is a Simis ideal.
\end{proof}

The following corollary is a direct consequence of Theorem \ref{Th-General}.

\begin{corollary}
    With the notation of Set-up \ref{setup}, assume that $A_j \not\subset \bigcup_{k \ne j} A_k$ for all $1 \le j \le r.$ Then $I$ is a Simis ideal if and only if there exists a Simis squarefree monomial ideal $J$ and a standard linear weighting $w$ such that $I = J_w.$ In particular, Conjecture \ref{C2} holds for $I.$
\end{corollary}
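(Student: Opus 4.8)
The plan is to obtain this statement directly from Theorem~\ref{Th-General}; the only point that needs checking is that the hypothesis imposed in the corollary is a strengthening of the hypothesis required there. So first I would fix the data as in Set-up~\ref{setup} and assume $A_j \not\subset \bigcup_{k \ne j} A_k$ for every $1 \le j \le r$. Now take an arbitrary pair $(k,l) \in X_I$ and an arbitrary index $j \in [r] \setminus \{k,l\}$. Since $k \ne j$ and $l \ne j$, both $k$ and $l$ belong to $[r] \setminus \{j\}$, hence $A_k \cup A_l \subset \bigcup_{m \ne j} A_m$. The corollary's hypothesis gives $A_j \not\subset \bigcup_{m \ne j} A_m$, so a fortiori $A_j \not\subset A_k \cup A_l$. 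As $(k,l) \in X_I$ and $j \in [r] \setminus \{k,l\}$ were arbitrary, the condition ``$A_j \not\subset A_k \cup A_l$ for every $j \in [r]\setminus\{k,l\}$ and for all $(k,l) \in X_I$'' required by Theorem~\ref{Th-General} holds.

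With this verification in hand, I would simply invoke Theorem~\ref{Th-General}: it yields at once that $I$ is a Simis ideal if and only if $I = J_w$ for some Simis squarefree monomial ideal $J$ and some standard linear weighting $w$, and hence that Conjecture~\ref{C2} holds for $I$. I would add a one-line remark that the displayed set-theoretic implication is vacuous when $X_I = \emptyset$, but that case is already subsumed in the proof of Theorem~\ref{Th-General} (where $X_I = \emptyset$ forces $I = J_w$ with $J = \sqrt{I}$ via Remark~\ref{R-SLW} and \cite[Corollary~5.5]{MVV}).

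There is essentially no obstacle here: the argument is a purely combinatorial weakening of hypotheses followed by a citation of the already-established Theorem~\ref{Th-General}. If a self-contained proof were preferred, I would instead unwind Theorem~\ref{Th-General}: use Lemma~\ref{L-General}, whose containment hypothesis is met by exactly the computation above, to conclude that $X_I = \emptyset$ whenever $I$ is Simis, then apply Remark~\ref{R-SLW} to write $I = J_w$ with $J = \sqrt{I}$ and \cite[Corollary~5.5]{MVV} to transfer the Simis property between $I$ and $J$. But deriving the corollary straight from Theorem~\ref{Th-General} is shorter and is the route I would take.
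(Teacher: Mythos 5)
Your proposal is correct and matches the paper exactly: the paper states this corollary as a direct consequence of Theorem~\ref{Th-General}, and your verification that $A_k \cup A_l \subset \bigcup_{m \ne j} A_m$ (hence $A_j \not\subset \bigcup_{m\ne j}A_m$ implies $A_j \not\subset A_k \cup A_l$) is precisely the hypothesis check needed.
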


\begin{thm}\label{Th-2Height}
    With the notation of Set-up \ref{setup}, assume that  $\height(\p) = 2$ for all $\p \in Ass(I).$ Then $I$ is a Simis ideal if and only if there exists a Simis squarefree monomial ideal $J$ and a standard linear weighting $w$ such that $I = J_w.$ In particular, Conjecture \ref{C2} holds for $I.$
\end{thm}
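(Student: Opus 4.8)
The plan is to mirror the structure of the proof of Theorem~\ref{Th-General}, but with Lemma~\ref{L-2Height} playing the role that Lemma~\ref{L-General} played there. The point is that Lemma~\ref{L-2Height} already contains all the combinatorial work: under the standing hypothesis $\height(\p)=2$ for all $\p\in\Ass(I)$, it shows that $X_I\neq\emptyset$ forces $I^{(2)}\not\subset I^{2}$. So the theorem itself should reduce to a short formal argument.

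First I would dispose of the easy implication. If $I=J_w$ for some Simis squarefree monomial ideal $J$ and some standard linear weighting $w$, then \cite[Corollary 5.5]{MVV} gives directly that $I$ is a Simis ideal; no height hypothesis is needed for this direction. For the converse, suppose $I$ is Simis. In particular $I^{(2)}=I^{2}$, hence $I^{(2)}\subset I^{2}$. Applying the contrapositive of Lemma~\ref{L-2Height}, this forces $X_I=\emptyset$. Now Remark~\ref{R-SLW} applies: it produces a standard linear weighting $w=(w_1,\dots,w_n)$ on $R$ and shows that $I=J_w$ with $J=\sqrt{I}$ a squarefree monomial ideal. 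Finally, invoking \cite[Corollary 5.5]{MVV} once more in the other direction, the fact that $I=J_w$ is Simis yields that $J$ is itself a Simis squarefree monomial ideal. This exhibits the required pair $(J,w)$ and, in particular, verifies Conjecture~\ref{C2} for $I$.

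Since all the real content is packaged inside Lemma~\ref{L-2Height}, the main obstacle lies there rather than in the theorem: one must show that whenever two primary components $\q_k,\q_l$ disagree on the exponent of a common variable $x_{i_0}$, there is an explicit monomial witnessing $I^{(2)}\not\subset I^{2}$. I expect the delicate points of that lemma to be (a) the split into the cases $2w_1\le w_2$ (where a direct monomial built from $x_{i_0}^{w_2}$ and high powers of the "private" variables $i_j$ works) and $2w_1>w_2$ (where one reuses the construction of Lemma~\ref{L-General} with $\alpha=1$), and (b) the situation in which some third component satisfies $A_p\subset A_k\cup A_l$, which is exactly why Lemma~\ref{L-General2} alone does not suffice and one needs the subcase analysis of Lemma~\ref{L-2Height} (tracking the coefficients $m_a$ against the inequalities coming from Lemma~\ref{L-MatrixSystem}). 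Granting Lemma~\ref{L-2Height}, the proof of Theorem~\ref{Th-2Height} requires no further computation beyond the bookkeeping described above.
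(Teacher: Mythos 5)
Your argument is exactly the paper's: the authors prove Theorem~\ref{Th-2Height} by taking $I$ Simis, using $I^{(2)}=I^{2}$ together with Lemma~\ref{L-2Height} to conclude $X_I=\emptyset$, then applying Remark~\ref{R-SLW} and \cite[Corollary 5.5]{MVV} in both directions, just as you describe. Your proposal is correct and follows the same route.
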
	

\begin{proof}
   One can use Lemma \ref{L-2Height} and follow similar steps as in the proof of Theorem~\ref{Th-General}.
\end{proof}

\begin{corollary}\label{C-2Height}
    With the notation of Set-up \ref{setup}, let $\height(\p) = 2$ for all $\p \in \Ass(I)$. Then the following conditions are equivalent:
    \begin{enumerate}[\rm(i)]
        \item $I$ is a Simis ideal.

        \item $I^{(2)} = I^2.$

        \item $G_I$ is a bipartite graph and $I$ has a standard linear weighting.
    \end{enumerate}
\end{corollary}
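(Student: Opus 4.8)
The plan is to prove the cycle of implications $(i)\Rightarrow(ii)\Rightarrow(iii)\Rightarrow(i)$. The implication $(i)\Rightarrow(ii)$ is immediate, since by definition a Simis ideal satisfies $I^{(s)}=I^{s}$ for all $s\ge 1$, in particular for $s=2$.

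For $(ii)\Rightarrow(iii)$: assume $I^{(2)}=I^{2}$. In particular $I^{(2)}\subset I^{2}$, so Lemma~\ref{L-2Height} forces $X_I=\emptyset$, and Remark~\ref{R-SLW} then produces a standard linear weighting $w=(w_1,\dots,w_n)$ with $I=J_w$ and $J=\sqrt{I}$; thus $I$ has a standard linear weighting. Since $\height(\p)=2$ for every $\p\in\Ass(I)$, each $A_j$ has exactly two elements, so $\h_I$ is a graph (denoted $G_I$) and $J=\sqrt{I}=J(G_I)$ is its cover ideal. Because $I=J_w$, the equivalence of containments used in Section~\ref{S-Waldschmidt} (via \cite[Lemma~1]{Toledo2021} and \cite[Lemma~4.3]{MVV}) shows that $I^{(2)}\subset I^{2}$ is equivalent to $J^{(2)}\subset J^{2}$; combined with $J^{2}\subset J^{(2)}$ this gives $J(G_I)^{(2)}=J(G_I)^{2}$. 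Viewing $G_I$ as a weighted oriented graph all of whose vertex weights equal $1$ (so $V^{+}=\emptyset$ and the sink condition is vacuous), \cite[Theorem~7.4]{MVV}---equivalently the classical characterization of bipartite graphs by their cover ideals---forces $G_I$ to be bipartite.

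For $(iii)\Rightarrow(i)$: assume $G_I$ is bipartite and $I$ has a standard linear weighting, say $I=J_w$ with $J$ squarefree and $w=(w_1,\dots,w_n)$. As the $w_i$ are positive, passing from $J$ to $J_w$ preserves the supports of the minimal generators, so $\sqrt{I}=\sqrt{J_w}=J$; hence $J=\sqrt{I}=J(G_I)$ is the cover ideal of a bipartite graph, and therefore a Simis squarefree monomial ideal by the classical result that such cover ideals are normally torsion-free. Now $I=J_w$ with $J$ a Simis squarefree monomial ideal and $w$ standard linear, so $I$ is a Simis ideal by Theorem~\ref{Th-2Height} (or directly by \cite[Corollary~5.5]{MVV}). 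This closes the cycle.

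The only genuinely non-routine ingredient is the graph-theoretic characterization of bipartiteness, invoked twice and in opposite directions: that $J(G)^{(2)}=J(G)^{2}$ forces $G$ to be bipartite (in $(ii)\Rightarrow(iii)$), and that $G$ bipartite makes $J(G)$ a Simis ideal (in $(iii)\Rightarrow(i)$); both are classical but must be cited precisely. A secondary point to handle carefully is that the weighting-invariance of the symbolic-versus-ordinary containments is a genuine equivalence, so it can legitimately be run in reverse to descend from $I^{(2)}=I^{2}$ to $J^{(2)}=J^{2}$; the remaining steps are bookkeeping built on Lemma~\ref{L-2Height}, Remark~\ref{R-SLW}, Remark~\ref{R-WeightedIdeal}, and Theorem~\ref{Th-2Height}.
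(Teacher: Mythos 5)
Your proposal is correct and follows essentially the same route as the paper: (i)$\Rightarrow$(ii) trivially, (ii)$\Rightarrow$(iii) via Lemma~\ref{L-2Height} forcing $X_I=\emptyset$, Remark~\ref{R-SLW} producing the standard linear weighting, descent to $J(G_I)^{(2)}=J(G_I)^2$ (the paper cites \cite[Corollary 5.5]{MVV} where you use the containment-equivalence of \cite{Toledo2021} and \cite[Lemma 4.3]{MVV}, which amounts to the same thing), and then the bipartiteness characterization; and (iii)$\Rightarrow$(i) via the classical fact that cover ideals of bipartite graphs are Simis together with \cite[Corollary 5.5]{MVV}.
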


\begin{proof}
    (i) $\Rightarrow$ (ii) Follows directly.

    (ii) $\Rightarrow$ (iii) Let $J(G_I)$ be the cover ideal of $G_I.$ Then $J(G_I) = \sqrt{I}.$ By Lemma ~\ref{L-2Height}, we get $X_I = \emptyset.$ It follows from Remark \ref{R-SLW} that $I = J(G_I)_w$ for some standard linear weighting $w.$ Using \cite[Corollary 5.5]{MVV}, we obtain that $J(G_I)^{(2)} = J(G_I)^2.$ Now, \cite[Proposition 6.2]{MVV} implies that $G_I$ is a bipartite graph.
    
    (iii) $\Rightarrow$ (i) Let $J \subset R$ be a squarefree monomial ideal, and let $w$ be a standard linear weighting such that $I = J_w.$ Then $J(G_I) = J.$ It follows from \cite[Corollary 2.6]{gitler2005blowup} that $J(G_I)$ is Simis. In view of \cite[Corollary 5.5]{MVV}, we see that $I$ is Simis.
\end{proof}

\section*{Declarations}

\subsection*{Acknowledgments} 

The authors thank the developers of Macaulay2 \cite{M2} for their computational support.

%\subsection*{Funding details} 

%The first author was supported by the Senior Research Fellowship from CSIR-UGC, New Delhi, India. The second author was supported by the Institute Fellowship from the Ministry of Human Resource Development (MHRD), Government of India.

\subsection*{Disclosure statement} 

The authors declare that they have no competing interests.

\subsection*{Data availability statement} 

No datasets were generated or analyzed during the current study. 

\bibliography{ref}
\bibliographystyle{plain}

\end{document}